\let\oldmarginpar\marginpar
\renewcommand\marginpar[1]{\oldmarginpar[\raggedleft\footnotesize #1]%
{\raggedright\footnotesize #1}}
\long\def\@savemarbox#1#2{\global\setbox#1\vtop{\hsize\marginparwidth 
  \@parboxrestore\tiny\raggedright #2}}
\renewcommand*{\backref}[1]{}
\renewcommand*{\backrefalt}[4]{
  \ifcase #1
  [No citations.]
  \or [#2]
  \else [#2]
  \fi }
   \def\MR#1{}
\theoremstyle{plain}
\newtheorem{theorem}{Theorem}[section]
\newtheorem{corollary}[theorem]{Corollary}
\newtheorem{lemma}[theorem]{Lemma}
\newtheorem{proposition}[theorem]{Proposition}
\newtheorem*{namedtheorem}{\theoremname}
\newcommand{\theoremname}{testing}
\theoremstyle{definition}
\newtheorem{definition}[theorem]{Definition}
\newtheorem{question}[theorem]{Question}
\newcommand{\refthm}[1]{Theorem~\ref{Thm:#1}}
\newcommand{\reflem}[1]{Lemma~\ref{Lem:#1}}
\newcommand{\refprop}[1]{Proposition~\ref{Prop:#1}}
\newcommand{\refcor}[1]{Corollary~\ref{Cor:#1}}
\newcommand{\refeqn}[1]{\eqref{Eqn:#1}}
\newcommand{\refdef}[1]{Definition~\ref{Def:#1}}
\newcommand{\refsec}[1]{Section~\ref{Sec:#1}}
\newcommand{\reffig}[1]{Figure~\ref{Fig:#1}}
\newcommand{\R}{\mathbb{R}}
\newcommand{\Z}{\mathbb{Z}}
\renewcommand{\H}{\mathcal H}
\newcommand{\vol}{{\rm vol}}
\newcommand{\voct}{{v_{\rm oct}}}
\newcommand{\vtet}{{v_{\rm tet}}}
\def\from{\colon\thinspace}
\title[A volumish theorem for alternating virtual links]{A volumish theorem for alternating virtual links}
\author[A.\ Champanerkar]{Abhijit Champanerkar}
\author[I.\ Kofman]{Ilya Kofman}
\address{Department of Mathematics, College of Staten Island \& The Graduate Center, City University of New York, New York, NY}
\email{abhijit@math.csi.cuny.edu, ikofman@math.csi.cuny.edu}
\begin{document}

\begin{abstract}
Dasbach and Lin proved a ``volumish theorem'' for alternating links.
We prove the analogue for alternating link diagrams on surfaces, which
provides bounds on the hyperbolic volume of a link in a thickened
surface in terms of coefficients of its reduced Jones-Krushkal
polynomial.  Along the way, we show that certain coefficients of the
$4$--variable Krushkal polynomial express the cycle rank of the
reduced Tait graph on the surface.
\end{abstract}

\maketitle

\section{Introduction}

In \cite{DL}, Dasbach and Lin proved the following ``volumish''
theorem for any hyperbolic alternating knot $K$ in $S^3$: Let
$$V_K(t)=a_nt^n+\cdots +a_mt^m$$ be the Jones polynomial of $K$, with
sub-extremal coefficients $a_{n+1}$ and $a_{m-1}$.  Let $\vtet\approx
1.01494$ and $\voct\approx 3.66386$ be the hyperbolic volumes of the
regular ideal tetrahedron and octahedron, respectively.  Then
$$ \voct(\max(|a_{n+1}|,|a_{m-1}|)-1)\leq \vol(S^3-K)\leq 10\vtet(|a_{n+1}|+|a_{m-1}|-1).$$
Their proof relied on volume bounds proved in \cite{Lackenby,
  AgolStormThurston}, which showed that the hyperbolic volume of
$S^3-K$ is linearly bounded above and below by the twist number
$t(K)$.  Dasbach and Lin proved that for any reduced alternating
diagram of $K$, the twist number $t(K)=|a_{n+1}|+|a_{m-1}|$.

Recently in \cite{HowiePurcell, KP}, similar linear volume bounds in terms of
twist number were proved for certain alternating links in thickened
surfaces, but the twist number was not proved to be a link invariant.
For alternating links in $S^3$, the invariance of $t(K)$ follows from
the proof of the Tait flyping conjecture in
\cite{menasco-thist:alternating}, but the Tait flyping conjecture
remains open for alternating virtual links (see \cite{Boden-Karimi}).

In \refsec{twist} below, for a link $K$ in a thickened surface
$F\times I$, we define a \emph{homological twist number} $\tau_F(K)$.
In \refsec{Jones-Krushkal}, we give a sufficient condition for
$\tau_F(K)$ to be an invariant of a reduced alternating surface link
diagram by expressing $\tau_F(K)$ in terms of specific coefficients of
the reduced Jones-Krushkal polynomial.  Using the new volume bounds in
terms of twist number, we prove a ``volumish'' theorem for alternating
links on surfaces, which extends to virtual links.

There is an underlying similarity between the proofs of the two
volumish theorems.  For alternating links in $S^3$, to prove that the
twist number is expressed by the sub-extremal coefficients of the
Jones polynomial, Dasbach and Lin relied on two key facts: (1) the
Jones polynomial of an alternating link is a specialization of the
two-variable Tutte polynomial of its Tait graph, and (2) certain
coefficients of the Tutte polynomial express the cycle rank of the
reduced Tait graph.  For alternating links in thickened surfaces, we
rely on two similar facts: (1) the reduced Jones-Krushkal polynomial
is a specialization of the Krushkal polynomial, which extends the
Tutte polynomial to a $4$--variable polynomial invariant of graphs on
surfaces, and (2) certain coefficients of the Krushkal polynomial
express the cycle rank of the reduced Tait graph on the surface (see
\refdef{Tait}).  The latter claim for the Krushkal polynomial is
\refthm{Krushkal}, which is of independent interest, and is proved in
Section~\ref{Sec:Krushkal} below.

Let $J_K(t,z)$ denote the reduced Jones-Krushkal polynomial, defined
in Section~\ref{Sec:Jones-Krushkal} below. Boden and Karimi
\cite{Boden-Karimi} proved that $J_K(t,z)$ is an invariant of oriented
links under isotopy and diffeomorphism of the thickened surface.
In \refthm{Jones-Krushkal}, we express the homological twist number in
terms of specific coefficients of $J_K(t,z)$.  This provides linear
bounds on the hyperbolic volume of the link $K$ in the thickened
surface in terms of the sub-extremal terms of $J_K(t,0)$ using the
following geometric results.

For a link $K$ in a thickened surface $F\times I$ with a \emph{weakly
  generalized alternating} (WGA) diagram, Howie and Purcell
\cite{HowiePurcell} defined the twist number $t_F(K)$ on the
projection surface $F\times\{0\}$, and showed there is a lower bound
on volume in terms of the twist number.  Note that if $F$ is a torus,
then $F\times I-K$ has a unique hyperbolic structure; for $g\geq 2$,
we consider the unique hyperbolic structure for which the boundary
surfaces $F\times\{\pm 1\}$ are totally geodesic.  A surface link
diagram $D$ is \emph{cellularly embedded} if the regions $F-D$ are
disks.  Kalfagianni and Purcell \cite{KP} proved there is also an
upper bound on volume when $K$ has a cellularly embedded WGA diagram
$D$.  In particular, $D$ has representativity at least $4$ on
$F$. (See \cite[Section~2]{KP} for definitions.)

A crossing $c$ is called \emph{nugatory} if there exists a separating
simple closed curve on $F$ that intersects $D$ only at $c$.  A surface
link diagram $D$ is called \emph{reduced} if it is cellularly embedded
and has no nugatory crossings.  Additionally, $D$ is \emph{strongly
  reduced} if there do not exist any simple closed curves on $F$ that
intersect $D$ at only one crossing; i.e., neither Tait graph of $D$ on
$F$ has loops.  A WGA diagram is reduced alternating, but it may not
be strongly reduced.

We now combine the hyperbolicity and lower bound from \cite{HowiePurcell}, the
upper bound from \cite{KP} modified for the homological twist number,
and our \refthm{Jones-Krushkal} below to state the volumish theorem
for alternating virtual links:

\begin{theorem}\label{Thm:Intro}
For a closed orientable surface $F$ of genus $g\geq 1$, let $K$ be
a non-split oriented link in $F\times I$ that admits a cellularly
embedded, strongly reduced WGA diagram $D$ on $F\times\{0\}$.
Let $\tau_F(K)$ be the homological twist number of $D$.
Let $J_K(t,0)=a_nt^n+\cdots +a_mt^m$, with sub-extremal coefficients $a_{n+1}$ and $a_{m-1}$.
Then 
$$ \tau_F(K) = |a_{n+1}| + |a_{m-1}| - 2g,$$
$\tau_F(K)$ is an invariant of $K$ in $F\times I$, and $F\times I-K$ is hyperbolic with
\begin{align*}
  \frac{\voct}{2} \tau_F(K) \leq \vol(F\times I-K) < 10\vtet\, \tau_F(K) &\quad\text{if} \ g=1,\\
  \frac{\voct}{2} (\tau_F(K)-3\chi(F)) \leq \vol(F\times I-K) < 12\voct\, \tau_F(K) &\quad\text{if} \ g\geq 2.
\end{align*}
\end{theorem}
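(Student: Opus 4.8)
The plan is to assemble \refthm{Jones-Krushkal} (the coefficient formula for the homological twist number) with the invariance of the reduced Jones--Krushkal polynomial and the geometric volume bounds of \cite{HowiePurcell,KP}; the only genuinely new work lies in passing from the \emph{geometric} twist number used in those volume estimates to the \emph{homological} twist number $\tau_F(K)$. I would organize the argument in three stages: (i) the coefficient identity together with invariance; (ii) hyperbolicity and the lower bound; (iii) the upper bound.

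For (i), \refthm{Jones-Krushkal} expresses $\tau_F(K)$ through prescribed coefficients of $J_K(t,z)$; setting $z=0$ and extracting the sub-extremal coefficients $a_{n+1},a_{m-1}$ of $J_K(t,0)$ gives $\tau_F(K)=|a_{n+1}|+|a_{m-1}|-2g$. The correction $-2g$ is precisely the genus contribution to the cycle rank of the reduced Tait graph on $F$ identified in \refthm{Krushkal}, and the strongly reduced hypothesis (neither Tait graph has loops) is what makes these coefficients compute the cycle rank, hence validates the formula. Because Boden and Karimi \cite{Boden-Karimi} proved that $J_K(t,z)$ is an isotopy and diffeomorphism invariant of $K$ in $F\times I$, and $\tau_F(K)$ is now written entirely in terms of its coefficients, $\tau_F(K)$ is an invariant.

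For (ii), a WGA diagram makes $F\times I-K$ hyperbolic by \cite{HowiePurcell}: the unique complete structure when $g=1$, and the structure with totally geodesic boundary $F\times\{\pm1\}$ when $g\geq2$. Their lower bound is stated in terms of the geometric twist number $t_F(K)$, so I would verify that the strongly reduced hypothesis forces $\tau_F(K)\leq t_F(K)$ (homologically trivial or parallel twist regions can only decrease the count, and loops are excluded). Combining with the Howie--Purcell estimate yields $\tfrac{\voct}{2}\bigl(\tau_F(K)-3\chi(F)\bigr)\leq\tfrac{\voct}{2}\bigl(t_F(K)-3\chi(F)\bigr)\leq\vol(F\times I-K)$, using $-3\chi(F)=6g-6\geq0$; since $\chi(F)=0$ for $g=1$, this specializes to the stated torus inequality.

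Step (iii) is the main obstacle. The inequality $\tau_F(K)\leq t_F(K)$ runs the wrong way for an upper bound, so I cannot simply substitute into the Kalfagianni--Purcell estimate; instead I must re-derive their bound directly in terms of $\tau_F(K)$. Concretely, I would revisit the angled polyhedral decomposition of \cite{KP} for a cellularly embedded WGA diagram with representativity at least $4$ and check that the number of pieces it produces is controlled by the homological twist number rather than the geometric one, once more using strong reducedness to rule out the degenerate twist regions that separate the two counts. I expect the torus and higher-genus cases to demand separate treatment, since the torus constant $10\vtet$ mirrors the Dasbach--Lin $S^3$ bound while the constant $12\voct$ for $g\geq2$ arises from the totally geodesic boundary decomposition; I would match each constant against its geometric input to obtain $\vol(F\times I-K)<10\vtet\,\tau_F(K)$ for $g=1$ and $\vol(F\times I-K)<12\voct\,\tau_F(K)$ for $g\geq2$.
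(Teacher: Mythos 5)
Your stages (i) and (ii) track the paper's actual argument: strong reducedness forces $\lambda=\bar\lambda=0$ (hence $\gamma=\bar\gamma=0$), so \refthm{Jones-Krushkal} gives $\tau_F(D)=\mu+\bar\mu-2g$, invariance follows from \refcor{Jones-Krushkal} via Boden--Karimi invariance of $J_K(t,z)$, and the lower volume bound uses $\tau_F(D)\leq t_F(D)$ exactly as you describe. One gloss in (i): it is not automatic that the $\mu$ and $\bar\mu$ terms of \refeqn{JK} \emph{are} the sub-extremal coefficients of $J_K(t,0)$. The paper needs \reflem{extremal} (via the Boden--Karimi span theorem) to show that the extremal terms come only from $V^gX^n$ and $U^gY^N$, that the $\mu,\bar\mu$ terms sit at span $c-g-2$, and that none of these four terms carries a $z$ factor; ``setting $z=0$ and extracting the sub-extremal coefficients'' silently assumes all of this.

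The genuine gap is step (iii). You assert that the upper bound must be re-derived by reworking the Kalfagianni--Purcell polyhedral decomposition in terms of $\tau_F$, and you leave that as an unexecuted plan; but no re-derivation is needed, and the idea you are missing is the \emph{two-sided} comparison in \refprop{twist}: $\tau_F(D)\leq t_F(D)\leq 2\tau_F(D)$, with equality $\tau_F(D)=t_F(D)$ when $g=1$, since essential null-homologous $2$--cycles require $g\geq 2$. Given this, the upper bound is immediate from \cite[Theorem~1.4]{KP}: for $g=1$ the bound $\vol(F\times I-K)<10\vtet\,t_F(D)$ becomes $\vol(F\times I-K)<10\vtet\,\tau_F(D)$ with no change of constant, and for $g\geq 2$ one has $\vol(F\times I-K)<6\voct\,t_F(D)\leq 12\voct\,\tau_F(D)$, so the constant $12\voct$ is simply the doubled KP constant rather than something produced by the totally geodesic boundary decomposition, as you conjectured. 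Without $t_F(D)\leq 2\tau_F(D)$ (or a completed re-derivation, which would be a substantial piece of hyperbolic geometry you have not carried out), the upper bounds in the theorem remain unproven in your proposal.
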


We prove \refthm{Intro} in Section~\ref{Sec:Jones-Krushkal} below.
The strongly reduced condition on $D$ can be weakened to allow certain
loops in the Tait graph if we use the expression for $\tau_F(D)$ in
\refthm{Jones-Krushkal}.
See \refcor{Jones-Krushkal} for cases with loops such that
$\tau_F(D)$ is a link invariant.

\subsection*{Virtual links}

Virtual links and links in thickened surfaces are compared in detail
in \cite{Boden-Karimi}.  In short, virtual links are in one-to-one
correspondence with stable equivalence classes of links in thickened
surfaces, and each such class has a unique irreducible representative
\cite{Kuperberg}.  For any virtual link diagram, there is an explicit
construction to associate a cellularly embedded link diagram on a
minimal genus surface.  Moreover, a virtual link is alternating if and
only if it can be represented by an alternating surface link diagram.
Any reduced alternating surface link diagram is checkerboard
colorable, but alternating virtual links also admit alternating
surface diagrams which are not checkerboard colorable.  The main
result of \cite{Boden-Karimi} is the following diagrammatic
characterization of alternating links in thickened surfaces: If $K$ is
a non-split alternating link in $F\times I$, then any connected
reduced alternating diagram $D$ on $F$ has minimal crossing number
$c(K)$, and any two reduced alternating diagrams of $K$ have the same
writhe $w(K)$.  


The main result of \cite{Boden-Karimi} then implies that the reduced
alternating surface link diagram has crossing number and writhe that
are invariants of the virtual link.  By \cite[Corollary~8]{Boden-Karimi2020},
$F$ is the minimal genus representative of $K$.  So we obtain an
invariant of alternating virtual links by computing $J_K(t,z)$ on a
minimal genus representative reduced alternating surface link diagram
$D$.  The genus of $F$ is encoded as the highest power of $z$ in
$J_K(t,z)$.  \refcor{Jones-Krushkal} below then implies that the
homological twist number of $D$ on $F$ is also an invariant of the
virtual link.  Thus, \refthm{Intro} extends to any alternating virtual
link that admits an appropriate alternating surface link diagram.

\subsection*{Related results}
Recently, several preprints have appeared with related results.

In \cite{Boden-Karimi-Sikora}, Boden, Karimi and Sikora prove the
analogues of the Tait conjectures for adequate links in thickened
surfaces.  Any alternating link diagram in a thickened surface is
adequate, so a natural question is how to extend
\refthm{Jones-Krushkal} to adequate links in thickened surfaces.

In \cite{BaKaRu}, a general equivalence is established between ribbon
graphs and virtual links.  As our main results rely on the Krushkal
polynomial, which is an invariant of ribbon graphs, this philosophy
underlies our results as well.

In \cite{KalfBavier}, Bavier and Kalfagianni prove results similar to
\refthm{Intro} without using polynomial invariants of ribbon graphs.
Note that in \cite{KalfBavier}, \emph{reduced} is the same as
\emph{strongly reduced} here.  Their proof relies on the \emph{guts}
of a $3$--manifold cut along an essential surface, which is the union
of all hyperbolic pieces in its JSJ-decomposition, and the Euler
characteristic of the guts is related to the twist number using
results in \cite{Boden-Karimi-Sikora}.
Significantly, to prove that the twist number is invariant, Bavier and Kalfagianni
used another part of the Kauffman bracket skein module ${\mathcal
  S}(F\times I)$, which has a basis of all multi-loops on $F$,
including $\varnothing$.  Let $J_0(K)=b_nt^n+\cdots +b_mt^m$ be the
normalized invariant of $K$ in $F\times I$ coming from the coefficient
in $\Z[A^{\pm 1}]$ of $\varnothing$, so just the contractible states on
$F$.  They proved
  $ t_F(K) = |b_{n+1}| + |b_{m-1}| - 2g.$
In contrast, the Jones-Krushkal polynomial $J_K(t,0)$ uses states on $F$ that are null-homologous, including non-contractible states on $F$.
Thus, $J_K(t,0)\neq J_0(K)$ if $g\geq 2$, and in \refprop{twist} below, we show that $\tau_F(K)\neq t_F(K)$ if $g\geq 2$.
For links in thickened surfaces, we prove invariance of the homological twist number in
\refcor{Jones-Krushkal} for more general alternating link diagrams
than just strongly reduced ones because loops in Tait graphs are
allowed, as long as there are no genus-generating loops.


\subsection*{Acknowledgements}
The research of both authors is partially supported by grants from the Simons Foundation and PSC-CUNY.

\section{The Krushkal polynomial}
\label{Sec:Krushkal}

Krushkal \cite{Krushkal} introduced a $4$--variable polynomial
invariant of a graph $G$ embedded in a closed orientable surface $F$.
We denote this polyomial by $p_G(x,y,u,v)$ and refer to it as the
\emph{Krushkal polynomial}.  The variables $x$ and $y$ play the same
role as in the Tutte polynomial, while $u$ and $v$ reflect how $G$ is
embedded on $F$.  If $G$ is cellularly embedded (i.e., the faces of
$G$ on $F$ are disks), and $G^*$ denotes the dual graph on $F$, then
the Krushkal polynomial generalizes the Tutte polynomial, satisfying
both of its key properties: contraction-deletion and a duality
relation, $p_G(x,y,u,v)=p_{G^*}(y,x,v,u)$.

The Krushkal polynomial is defined as the following sum
over spanning subgraphs, such that every subgraph contributes a
monomial weight $x^ay^bu^cv^d$, where the exponents are topological
quantities related to the embedding of this subgraph. 

\begin{definition}[\cite{Krushkal}]\label{Def:Krushkal}
Let $G$ be a graph cellularly embedded in a closed orientable surface
$F$. The genus of a subsurface $S \subset F$ is the genus of the
closed surface obtained from $S$ by capping off all the boundary
components of $S$ by disks. For a spanning subgraph $H$ of $G$, let
$\mathcal{H}$ denote the regular neighborhood of $H$ on $F$.
Let $i\from G\to F$ denote the embedding, and let $i\from H\to F$ denote its restriction to $H$.
Define:
\begin{align*}
  c(H) &=  \text{number of components of}\ H, \\
  s(H)  &= \text{twice the genus of}\ \H, \\
  s^{\perp}(H) &= \text{twice the genus of the subsurface}\ F - \H, \\
  k(H) &= \dim(\ker(i_*\from H_1(H;\R)\to H_1(F;\R))).\\
\end{align*}
The Krushkal polynomial is defined as the following sum over all
spanning subgraphs $H\subset G$:
\begin{equation}
  \label{Eqn:Krushkal}
  p_G(x,y,u,v)=\sum_{H\subset G}x^{c(H)-c(G)}y^{k(H)}u^{s(H)/2}v^{s^{\perp}(H)/2}.
\end{equation}
We will refer to the monomial terms in \refeqn{Krushkal} as \emph{weights} on
corresponding subgraphs of $G$.
\end{definition}

The Tutte polynomial $T_G(X,Y)$ is related to the Whitney rank
generating function $R_G(x,y)$ by $T_G(X,Y)=R_G(X-1,Y-1)$ (see
\cite[\textsection~15.4]{Welsh}), which are extensively studied
polynomial invariants of graphs and matroids.  If $g$ denotes the
genus of $F$, by \cite[Lemma~2.3]{Krushkal},
\begin{equation}
  \label{eq:k-Tutte}
  R_G(x,y)=y^g p_G(x,y,y,y^{-1}),\ {\rm and} \  T_G(X,Y)=R_G(X-1,Y-1)
\end{equation}
The substitution $x = X-1$ and $y= Y-1$ will play a key
role in the proof of \refthm{Krushkal}.
So we define $$P_G(X,Y,U,V)=p_G(X-1,Y-1,U,V).$$
Another specialization to obtain the Jones-Krushkal polynomial is discussed in
Section~\ref{Sec:Jones-Krushkal}.
 
\begin{definition}\label{Def:parallel}
Two edges in $G$ are \emph{parallel} if they are homologous on $F$.
Note that parallel non-loop edges connect the same vertices, but
parallel loops may be disjoint.  Let $G'$ denote the \emph{reduced
  graph} of $G$ obtained by deleting all but one edge in each set of
parallel edges in $G$, and deleting all homologically trivial loops,
such that the vertex set $V(G')=V(G)$.  Let $G=(V,E)$ and $G'=(V,E')$.
Let $\ell=\ell(G')$ denote the subgraph of loops in $G'$, and let
$G'-\ell=(V,E'-\ell)$.
Let
$$\mu =b_1(G'-\ell)=|E'-\ell|-|V|+c(G') \quad \text{and}\quad \lambda = b_1(\ell)=|\ell|.$$
Note that although $G'$ is not uniquely determined, $\mu$ and $\lambda$ are invariants of $G$.
\end{definition}

\begin{theorem}\label{Thm:Krushkal}
Let $G$ be a graph embedded in a surface $F$ of genus $g\geq 1$. Let
$\ell_0$ be the set of homologically trivial loops in $G$.
Let $k=|\ell_0|$ and $n=|V(G)|-c(G)$.
Then $P_G(X,Y,U,V)$ has the following coefficients:
$$ \mu\, V^gX^{n-1}Y^k + \lambda\, V^{g-1}X^nY^k.$$
\end{theorem}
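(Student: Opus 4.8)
The plan is to read off the two coefficients directly from the defining sum \refeqn{Krushkal} after the substitution $x=X-1$, $y=Y-1$, using topological constraints on the exponents to isolate exactly which spanning subgraphs feed each monomial. The starting point is the termwise form of \eqref{eq:k-Tutte} (\cite[Lemma~2.3]{Krushkal}): for every spanning subgraph $H\subseteq G$,
$$ s^{\perp}(H)/2 = g - r(H) + s(H)/2, \qquad r(H):=b_1(H)-k(H), $$
where $r(H)$ is the rank of the image of $i_*\from H_1(H;\R)\to H_1(F;\R)$ and $b_1(H)=|E(H)|-|V|+c(H)$. I will also use the inequality $s(H)\le r(H)$: a genus-$\gamma$ handle of $\H$ carries a rank-$2\gamma$ symplectic subspace of $H_1(\H;\R)=H_1(H;\R)$ on which the intersection form is nondegenerate, and since $\H\hookrightarrow F$ preserves intersection numbers this subspace injects into $H_1(F;\R)$, giving $r(H)\ge 2\gamma=s(H)$. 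Both target monomials carry $U^0$, hence live in the $s(H)=0$ part, where the identity reduces to $s^{\perp}(H)/2=g-r(H)$; the inequality then guarantees that $r(H)=0$ (resp. $r(H)=1$) is exactly the condition forcing $s(H)=0$ as well. Thus the $V^gX^{n-1}Y^k$ coefficient is $[X^{n-1}Y^k]\Phi$ with $\Phi=\sum_{r(H)=0}(X-1)^{c(H)-c(G)}(Y-1)^{k(H)}$, and the $V^{g-1}X^nY^k$ coefficient is $[X^nY^k]\Theta$ with $\Theta=\sum_{r(H)=1}(X-1)^{c(H)-c(G)}(Y-1)^{k(H)}$.

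For $\Phi$ (homologically trivial $H$, so $k(H)=b_1(H)$): such an $H$ contains no homologically nontrivial loop, so its edge set splits uniquely as a homologically trivial set $A$ of non-loop edges together with an arbitrary subset $S$ of the $k$ trivial loops $\ell_0$, with $c(H)=c(A)$ and $b_1(H)=b_1(A)+|S|$. Summing over $S$ factors out $(1+(Y-1))^{k}=Y^{k}$, so $[X^{n-1}Y^k]\Phi=[X^{n-1}]\sum_{A}(X-1)^{c(A)-c(G)}(-1)^{b_1(A)}$ over homologically trivial non-loop subgraphs $A$. Since $c(A)-c(G)\le n$ with equality iff $A=\varnothing$, only $c(A)-c(G)\in\{n-1,n\}$ survive $[X^{n-1}]$: the empty subgraph contributes $[X^{n-1}](X-1)^n=-n$, while a rank-one $A$ has all its non-loop edges joining a single pair of vertices and, to be homologically trivial, lying in one parallel class, contributing $\sum_{\text{classes }C}\sum_{\varnothing\ne A\subseteq C}(-1)^{|A|-1}=\sum_C 1=|E'-\ell|$, one for each parallel class of non-loop edges. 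Hence $[X^{n-1}Y^k]\Phi=|E'-\ell|-n=b_1(G'-\ell)=\mu$.

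For $\Theta$: the factor $[X^n]$ forces $c(H)-c(G)=n$, i.e. $H$ is loop-only, so $b_1(H)=|E(H)|$ and $k(H)=|E(H)|-1$; and $r(H)=1$ (together with $s(H)\le r(H)=1$, so $s(H)=0$) forces the nontrivial loops of $H$ to lie in a single parallel class $D$, with $S\subseteq\ell_0$ arbitrary. Grouping by $D$, the generating sum $(Y-1)^{-1}\bigl((1+(Y-1))^{|D|}-1\bigr)(1+(Y-1))^{k}=(Y^{|D|-1}+\cdots+1)\,Y^{k}$ shows each class contributes exactly $1$ to $[Y^{k}]$. Therefore $[X^nY^k]\Theta$ equals the number of parallel classes of nontrivial loops, which is $|\ell|=\lambda$.

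I expect the main obstacle to be the first paragraph: correctly identifying, through the termwise identity together with $s(H)\le r(H)$, that precisely the homologically trivial (resp. homological-rank-one) subgraphs feed the two monomials, and then controlling the binomial cross-terms produced by $x=X-1$. The decisive point is the $-n$ arising from the empty subgraph, which is exactly what converts the raw edge count $|E'-\ell|$ into the cycle rank $\mu=b_1(G'-\ell)$; getting this bookkeeping right (as opposed to only tracking leading terms) is the crux. The remaining per-parallel-class telescoping sums $\sum_{\varnothing\ne A\subseteq C}(-1)^{|A|-1}=1$ are routine once the contributing subgraphs and their homological data have been isolated.
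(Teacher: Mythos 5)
Your proof is correct, and it shares the paper's combinatorial core --- the per-parallel-class telescoping binomial sums together with the $-n$ correction coming from the empty spanning subgraph, which is exactly what turns the raw class count $|E'-\ell|$ into $\mu=b_1(G'-\ell)$ --- but the step where you isolate which subgraphs contribute is organized genuinely differently. The paper first reduces to $|\ell_0|=0$ using Krushkal's relation $P_G = Y\,P_{G-e}$ for a homologically trivial loop $e$, and then identifies the contributing subgraphs by direct topological case analysis: it computes the weights of the single-class subgraphs $H_j$ and $L_j$ and rules out everything else by genus arguments (in the loop case, via the three cases (a), (b), (c)). You instead work within the $(U,V)$--grading: combining Krushkal's termwise identity $s^\perp(H)/2 = g - r(H) + s(H)/2$ with the inequality $s(H)\le r(H)$ --- which you prove correctly via non-degeneracy of the intersection form on the genus part of $H_1(\H;\R)$ and its preservation under the embedding $\H\hookrightarrow F$ --- you identify the $U^0V^g$ and $U^0V^{g-1}$ strata uniformly with $\{H\colon r(H)=0\}$ and $\{H\colon r(H)=1\}$, and you absorb the trivial loops by summing over $S\subseteq\ell_0$, producing the $Y^k$ factor without invoking the reduction lemma. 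Your route buys uniformity and makes explicit a point the paper compresses: subgraphs whose non-loop edges join a single pair of vertices but lie in distinct homology classes (or whose loops spread over several parallel classes) are excluded simply because they force $r(H)\ge 1$ (resp.\ $r(H)\ge 2$), rather than by a separate genus computation for each configuration. One detail you should state explicitly: the claim that $r(H)=1$ confines the non-trivial loops of $H$ to a single parallel class uses that a simple closed curve on $F$ represents a primitive (or zero) class in $H_1(F;\Z)$, so that proportional loop classes are equal up to sign; this is standard, and the paper's own case analysis leans on comparable facts, but it is the one spot where your homological bookkeeping silently uses that the cycles in question are embedded curves.
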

\begin{proof}
By \cite[Lemma~2.2]{Krushkal}, $p_G(x,y,u,v)$ has the property
that if $e$ is a loop in $G$ which is trivial in $H_1(F)$, then $p_G =
(1+y)p_{G-e}$, so that $P_G = Y P_{G-e}$.  Thus, we only need to prove
the case $|\ell_0|=0$, so we will consider only loops in $G$ that are
non-trivial in $H_1(F)$.

The unique spanning subgraph $H_0$ of $G$ which consists of only
vertices and no edges has weight $v^gx^n$. Since any other subgraph
has a non-empty edge set, its weight has a lower exponent of $x$ (if
it has non-loop edges), or a lower exponent of $v$ (if it has
homologically non-trivial loops).  Thus, the term $v^gx^n$ occurs in
$p_G(x,y,u,v)$ with coefficient $1$.

Let $e'$ be a non-loop edge of $G'$, and let $\{e_1,\ldots, e_m\}$ be
the set of all edges of $G$ parallel to $e'$, which we call the edge
class of $e'$. For $1 \leq j\leq m$, let $H_j$ denote one of the
spanning subgraphs of $G$ which consists of $j$ edges from the edge
class of $e'$, and no other edges. The weight of each $H_j$
is $v^gx^{n-1}y^{j-1}$.  Summing over the weights of all such spanning
subgraphs $\{ H_j\subset G\}$, we get the following contribution to
$p_G(x,y,u,v)$:
\begin{equation}\label{Eqn:mu-sum}
  \displaystyle{\sum_{j=1}^m {m \choose j} v^gx^{n-1}y^{j-1} = \frac{v^gx^{n-1} }{y}\left(\sum_{j=1}^m {m \choose j}y^j\right) = \frac{v^gx^{n-1} }{y}((1+y)^m-1)}.
\end{equation}
Thus, for every non-loop edge $e'$ in $G'$, its edge class in $G$
contributes the expression \refeqn{mu-sum} to $p_G(x,y,u,v)$.

If $H$ is a spanning subgraph of $G$ with the factor $x^{n-1}$ in its weight,
then $c(H)=|V|-1$.  Hence, $H$ has the form of some $H_j$, possibly
with loops added.  If $H$ has any loops, then since the loops are
homologically non-trivial by assumption, the weight of $H$ has an
exponent of $v$ which is strictly less than $g$.  Thus, any term in
$p_G(x,y,u,v)$ with a $v^gx^{n-1}$ factor is contributed only by
the subgraphs $H_j$, so the term must be $v^gx^{n-1}y^{j-1}$ for $j\geq 1$.

Let's see how these terms transform in $P_G(X,Y,U,V)$.
With the substitution $x = X-1$ and $y= Y-1$,
the expression \refeqn{mu-sum} simplifies to 
\[ \frac{V^g(X-1)^{n-1} }{Y-1}(Y^m-1) = V^gX^{n-1}(1+Y^2 +\ldots + Y^{m-1}) + O(X^{n-2}). \]
Every non-loop edge in $G'$ contributes such an expression to $P_G(X,Y,U,V)$.
Moreover, as discussed above, the weight for $H_0$ is $v^gx^n$, which becomes $V^g(X-1)^n$.
Since $v^gx^n$ always has coefficient $1$ in $p_G$, $H_0$ contributes an additional coefficient $-n$ to the term $V^gX^{n-1}$ in $P_G$.
Therefore, if $|\ell_0|=0$, the coefficient on $V^gX^{n-1}$ in $P_G(X,Y,U,V)$ is
$$|E'-\ell|-n = |E'-\ell|-|V|+1 =b_1(G'-\ell)=\mu.$$
This proves the claim for $\mu$.

We now proceed similarly for loops in $G'$. Let $f'$ be a loop of
$G'$, and let $\{f_1,\ldots, f_m\}$ be the set of all loops of $G$
parallel to $f'$, which we call the edge class of $f'$.  For $1 \leq
j\leq m$, let $L_j$ denote one of the spanning subgraphs of $G$ which
consists of $j$ loops from the edge class of $f'$, and no other edges.
Since we assumed that all loops in $G$ are homologically non-trivial,
the weight of $L_j$ is $v^{g-1}x^{n}y^{j-1}$.  Summing over the
weights of all such spanning subgraphs $\{ L_j\subset G\}$, we get the
following contribution to $p_G(x,y,u,v)$:
\begin{equation}\label{Eqn:lambda-sum}
  \displaystyle{\sum_{j=1}^m {m \choose j} v^{g-1}x^{n}y^{j-1} = \frac{v^{g-1}x^{n} }{y}\left(\sum_{j=1}^m {m \choose j}y^j\right) = \frac{v^{g-1}x^{n} }{y}((1+y)^m-1)}.
\end{equation}
Thus, for every loop $f'$ in $G'$, its edge class in $G$
contributes the expression \refeqn{lambda-sum} to $p_G(x,y,u,v)$.

If $H$ is a spanning subgraph of $G$ with the factor $x^{n}$ in its
weight, then $c(H)=|V|$.  Hence, $H$ consists of only homologically
non-trivial loops.  We have three cases:
\begin{enumerate}
\item[(a)] All loops in $H$ are in one edge class of $G'$,  
\item[(b)] $H$ has loops in distinct edge classes of $G'$, and $g(\H)=0$,
\item[(c)] $H$ has loops in distinct edge classes of $G'$, and $g(\H)>0$.
\end{enumerate}
In case (a), $H$ is one of the subgraphs $L_j$.  In case (b), $H$ has
at least one pair of homologically non-trivial and non-homologous
loops, so $g(\H)=0$ implies that $F-\mathcal{H}$ has genus strictly
less than $g-1$. Hence, the weight of $H$ has an exponent of $v$ which
is strictly less than $g-1$.  In case (c), the weight of $H$ has a
factor $u^i$ with $i>0$.  Therefore, any term in $p_G(x,y,u,v)$ with a
$v^{g-1}x^n$ factor and without a $u$ factor is contributed only by
the subgraphs $L_j$, so the term must be $v^{g-1}x^{n}y^{j-1}$ for
$j\geq 1$.

With the substitution $x = X-1$ and $y= Y-1$,
the expression \refeqn{lambda-sum} simplifies to 
\begin{equation}\label{Eqn:lambda-sum2}
  \frac{V^{g-1}(X-1)^{n} }{Y-1}(Y^m-1) = V^{g-1}X^{n}(1+Y^2 +\ldots +Y^{m-1}) + O(X^{n-1}).
\end{equation}
Every loop in $G'$ contributes such an expression to
$P_G(X,Y,U,V)$, so if $|\ell_0|=0$, the coefficient on $V^{g-1}X^{n}$ in
$P_G(X,Y,U,V)$ is $\lambda$.
This completes the proof of the theorem.
\end{proof}

Below, we will need another coefficient of $P_G(X,Y,U,V)$, using the following definition.
\begin{definition}\label{Def:petal}
For a graph $G$ on the surface $F$, let $\ell(G)$ be the subgraph of loops in $G$.
We will say that $\{e_1,e_2\}\subset\ell(G)$ are \emph{genus-generating loops} if $g(\H(e_1\cup e_2))>0$.
Let $G'$ be the reduced graph of $G$.  Define
$$  \gamma(G) = \#\{\,\{e_1,e_2\}\subset\ell(G')\ |\ g(\H(e_1\cup e_2))>0\}.$$
We will say that $\{e_1,e_2,e_3\}\subset\ell(G)$ are \emph{$3$--petal loops} if no pair of loops is parallel and
$$g(\H(e_1\cup e_2\cup e_3))>0 \quad \text{and}\quad k(e_1\cup e_2\cup e_3)>0.$$
Note that if $\gamma(G)=0$, then $G$ has no $3$--petal loops.
The following figure shows an example of a graph with $3$--petal loops on the torus:
\begin{center}
  \includegraphics[height=55pt]{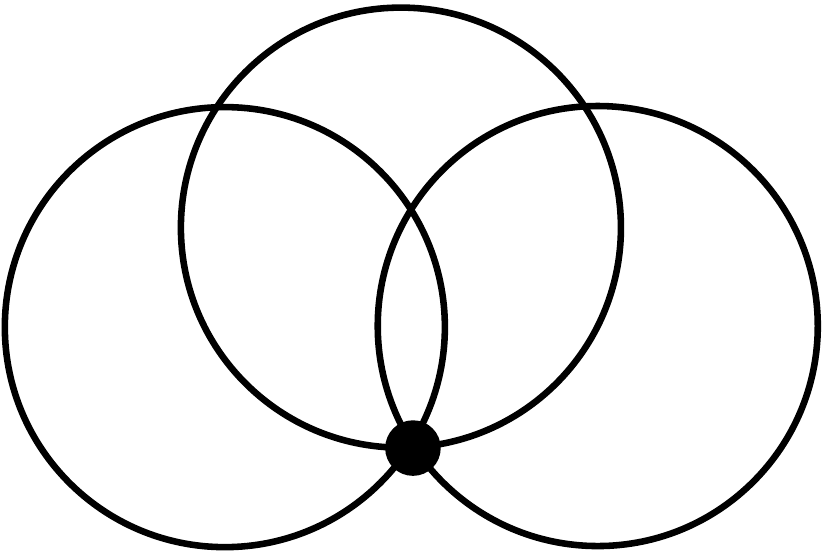}\hspace*{0.5in} \includegraphics[height=60pt]{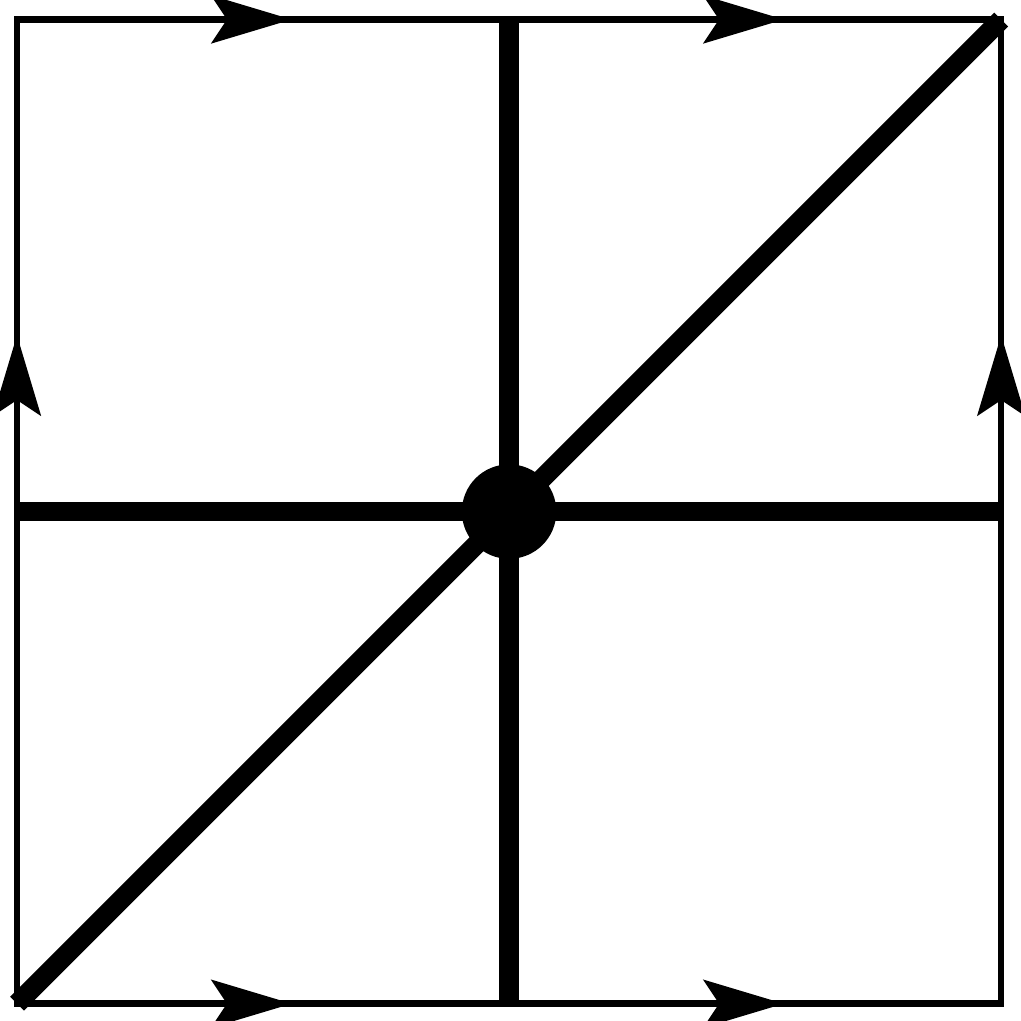}
\end{center}
\end{definition}

\begin{lemma}\label{Lem:ggl}
Let $G$ be a graph embedded in a surface $F$ of genus $g$, such that $G$ has no $3$--petal loops.
Let $k=|\ell_0|,\ n=|V(G)|-c(G)$, and $\gamma=\gamma(G)$.
Then $P_G(X,Y,U,V)$ has the following coefficient:
$$ \gamma\, UV^{g-1}X^nY^k.$$
\end{lemma}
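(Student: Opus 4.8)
The plan is to mirror the proof of \refthm{Krushkal}, now isolating the coefficient of the monomial $UV^{g-1}X^nY^k$ in $P_G(X,Y,U,V)$. First I would reduce to the case $|\ell_0|=0$ exactly as there: by \cite[Lemma~2.2]{Krushkal} each homologically trivial loop $e$ satisfies $P_G=YP_{G-e}$, so deleting the $k$ loops of $\ell_0$ strips off a factor $Y^k$, and it suffices to find the coefficient of $UV^{g-1}X^n$ assuming every loop of $G$ is homologically nontrivial. Next, just as in the computation of the $\lambda$--term, the factor $X^n$ restricts attention to the all-loop spanning subgraphs: since $(X-1)^{c(H)-c(G)}$ has $X$--degree $c(H)-c(G)\le n$, only subgraphs $H$ with $c(H)=|V|$, i.e.\ those consisting solely of (homologically nontrivial) loops, can contribute to any $X^n$ term, and each contributes the coefficient $1$ coming from the top term of $(X-1)^n$.

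The heart of the argument is a homological characterization of which all-loop subgraphs carry the weight $u^1v^{g-1}$. Writing $A_H=\mathrm{im}(i_*\colon H_1(H;\R)\to H_1(F;\R))$ and letting $\omega$ be the intersection form on $H_1(F;\R)$, I would use the standard surface-topology fact that the genus of the regular neighborhood is measured by this form, $s(H)=\mathrm{rank}(\omega|_{A_H})$, together with the Poincar\'e--Lefschetz duality statement that the homology images of the complementary subsurfaces $\H$ and $F-\H$ are symplectic orthogonal complements, so that $s^\perp(H)=\mathrm{rank}(\omega|_{A_H^\perp})$. A short symplectic linear-algebra computation, using $\mathrm{rank}(\omega|_A)=\dim A-\dim(A\cap A^\perp)$ and $\dim A^\perp=2g-\dim A$, then shows that the two requirements $s(H)=2$ and $s^\perp(H)=2g-2$ hold simultaneously if and only if $\dim A_H=2$ and $A_H\cap A_H^\perp=0$, i.e.\ $A_H$ is a $2$--dimensional nondegenerate (symplectic) subspace of $H_1(F;\R)$. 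Thus the contributing $H$ are precisely the all-loop subgraphs whose loop classes span a symplectic plane.

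I would then classify such $H$ by the number of distinct edge classes of $G'$ they meet. A single class spans a line, on which $\omega$ vanishes, giving $s(H)=0$; these produce the $\lambda$--term of \refthm{Krushkal}, not ours. Two non-parallel classes $[e_1],[e_2]$ span a symplectic plane exactly when $[e_1]\cdot[e_2]\ne 0$, which is precisely the condition that $\{e_1,e_2\}$ be a \emph{genus-generating} pair in the sense of \refdef{petal}. Finally, if a contributing $H$ meets three or more distinct edge classes while $A_H$ is still a symplectic plane, then any three of its loops $e_1,e_2,e_3$ are pairwise non-parallel, span $A_H$ (so $g(\H(e_1\cup e_2\cup e_3))=1>0$), and satisfy $k(e_1\cup e_2\cup e_3)=3-\dim A_H=1>0$; that is, they form a \emph{$3$--petal} triple, which the hypothesis forbids. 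This last step is the main obstacle, being the sole place the no-$3$--petal assumption enters to exclude spurious contributions, and it relies essentially on the symplectic characterization of the previous paragraph to locate the offending triple.

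It then remains to sum the contributions of the genus-generating pairs, paralleling \refeqn{lambda-sum}--\refeqn{lambda-sum2}. For such a pair of edge classes of sizes $m_1,m_2$, the subgraphs with $p\ge 1$ loops from the first and $q\ge 1$ from the second number $\binom{m_1}{p}\binom{m_2}{q}$, and each has weight $u\,v^{g-1}x^n y^{p+q-2}$, since $b_1(H)=p+q$ and $\dim A_H=2$ give $k(H)=(p+q)-2$. Summing over $p,q$ yields $u v^{g-1}x^n\,y^{-2}((1+y)^{m_1}-1)((1+y)^{m_2}-1)$, which under $x=X-1$, $y=Y-1$ becomes $U V^{g-1}(X-1)^n(1+Y+\cdots+Y^{m_1-1})(1+Y+\cdots+Y^{m_2-1})$, whose $X^nY^0$ coefficient is $1$. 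Each genus-generating pair thus contributes $1$, for a total of $\gamma$; restoring the factor $Y^k$ from $\ell_0$ gives the coefficient $\gamma\,UV^{g-1}X^nY^k$, as claimed.
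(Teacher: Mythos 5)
Your proposal is correct and follows essentially the same route as the paper's proof: reduce to $|\ell_0|=0$ via the $Y$--factor for trivial loops, use the $X^n$ factor to restrict to all-loop spanning subgraphs, invoke the no-$3$--petal hypothesis to show that only subgraphs supported on a single genus-generating pair of edge classes can carry the weight $uv^{g-1}x^n y^j$, and then perform the identical binomial sum and substitution $x=X-1$, $y=Y-1$. The only divergence is in the middle step, where the paper cites Krushkal's Equation~(4.7) applied to the reduced subgraph $H'$ to force $|E(H')|=2$, while you re-derive the equivalent fact from the intersection form — your characterization ($s(H)=2$ and $s^\perp(H)=2g-2$ iff $A_H$ is a nondegenerate plane) is exactly what that identity, combined with the orthogonal-complement duality, encodes — so the two arguments are mathematically interchangeable, with yours merely more self-contained.
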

\begin{proof}
As in the proof above, it suffices to prove the case $k=0$, so we
can assume that all loops in $G$ are homologically non-trivial.
We now determine all possible $H\subset G$ that can contribute to the term $UV^{g-1}X^n$ in $P_G(X,Y,U,V)$.
Due to the substitution $x = X-1$ and $y= Y-1$, we need to consider $H\subset G$ with weight $uv^{g-1}x^iy^j$.
Since $i\leq n$, the factor $X^n$ implies that $H$ can contribute to the term $UV^{g-1}X^n$ only if $i=n$.
Hence, $c(H)=|V(G)|$ so that $H\subset \ell(G)$ with weight $uv^{g-1}x^ny^j$.

Let $H'\subset G'$ be the reduced graph of $H$, as in \refdef{parallel}.
Let $\H'$ be the regular neighborhood of $H'$ in $F$.
The condition that $G$ has no $3$--petal loops implies that $G'$ and hence $H'$ have no $3$--petal loops.
By \cite[Equation~(4.7)]{Krushkal}, 
$$ k(H')+g(F)+g(\H')-g(F-\H') = b_1(H').$$
The factor $UV^{g-1}$ implies that $g(\H')=1$ and $g(F-\H')=g(F)-1$.
Thus, $k(H')=b_1(H')-2=|E(H')|-2$.
Since $g(\H')=1$, the condition that $H'$ has no $3$--petal loops now implies $k(H')=0$, so that $|E(H')|=2$.
So the only possible $H'\subset G'$ are the subgraphs $\{e_1\cup e_2\}\subset \ell(G')$ such that $g(\H')=1$. 
Therefore, if $H\subset G$ contributes to the term $UV^{g-1}X^n$ in $P_G(X,Y,U,V)$, then $H'$ is a pair of genus-generating loops.

Let $\{e_1'\cup e_2'\}\subset \ell(G')$ be a pair of genus-generating loops, and suppose for $I=1,2$, $G$ has $m_I$ parallel loops in the edge class $e_I'$.
Let $H_{i,j}\subset G$ denote the subgraph with $i$ loops (resp. $j$ loops) in the edge class $e_1'$ (resp. $e_2'$), which has weight $uv^{g-1}x^{n}y^{(i-1)+(j-1)}$.
As in \refeqn{lambda-sum}, summing over the weights of all $H_{i,j}\subset G$, we get the following contribution to $p_G(x,y,u,v)$:
\begin{equation}\label{Eqn:ggl-sum}
  \displaystyle{\sum_{\substack{1\leq i\leq m_1\\1\leq j\leq m_2}} {m_1 \choose i}{m_2 \choose j} uv^{g-1}x^{n}y^{(i-1)+(j-1)} =
    \frac{uv^{g-1}x^{n} }{y^2}((1+y)^{m_1}-1)((1+y)^{m_2}-1)}.
\end{equation}
Thus, for every pair of genus-generating loops in $G'$, its edge class
in $G$ contributes the expression \refeqn{ggl-sum} to $p_G(x,y,u,v)$.
As in \refeqn{lambda-sum2}, with the substitution $x = X-1$ and $y=
Y-1$, the expression \refeqn{ggl-sum} simplifies to
\[ UV^{g-1}X^{n}(1+Y^2 +\ldots +Y^{m_1-1})(1+Y^2 +\ldots +Y^{m_2-1}) + O(X^{n-1}). \]
Every pair of genus-generating loops in $G'$ contributes such an expression to
$P_G(X,Y,U,V)$, so if $k=|\ell_0|=0$, the coefficient on $UV^{g-1}X^{n}$ in
$P_G(X,Y,U,V)$ is $\gamma(G)$.
\end{proof}

\section{The homological twist number}\label{Sec:twist}

In this section, we introduce the homological twist number
$\tau_F(D)$, which counts sets of homologically twist-equivalent
crossings.  In contrast, the usual twist number $t_F(D)$, defined in
\cite[Definition~2.4]{KP}, counts \emph{twist regions} (maximal
strings of bigons) of $D$ on $F$.  Every twist region contributes one
homological twist to $\tau_F(D)$, but some crossings of $D$ which are
in distinct twist regions can be homologically twist-equivalent.  An
important advantage of \refdef{twist} below is that $\tau_F(D)$ is
invariant for any reduced alternating surface link diagram $D$,
without the need for $D$ to be twist-reduced.

\begin{figure}
\centerline{\includegraphics[height=82pt]{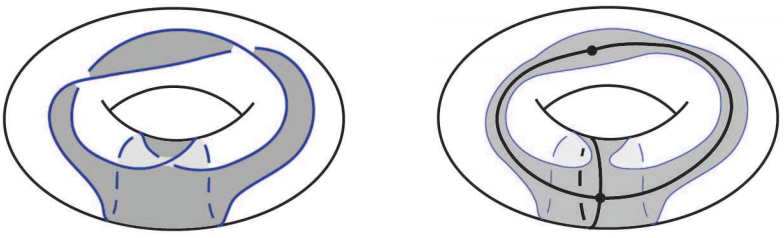}}
\caption{An alternating link diagram (left) and its Tait graph $G_A$ (right) on the torus \cite[Figure~5]{Krushkal}.}
\label{Fig:Tait}
\end{figure}

\begin{definition}\label{Def:Tait}
Let $D$ be a reduced alternating surface link diagram on $F$.  Fix a
checkerboard coloring on $D$.  Let $G_A$ (resp. $G_B$) be the Tait
graph (i.e., checkerboard graph) of $D$ on $F$, whose edges correspond
to crossings of $D$, and whose vertices correspond to shaded
(resp. unshaded) regions of $F-D$, such that $G_A$ and $G_B$ are dual
graphs on $F$.  See \reffig{Tait}.  Note that the Tait graph of a
reduced alternating surface link diagram may contain loops, but only
homologically non-trivial ones.  Let $G'_A$ and $G'_B$ be the
\emph{reduced Tait graphs} obtained by deleting all but one edge in
each set of parallel edges in $G_A$ and $G_B$, as in
\refdef{parallel}.
\end{definition}

See \reffig{many_cycles} for several examples of different kinds of cycles in the Tait graph on the surface $F$.

\begin{figure}
\centerline{\includegraphics[height=95pt]{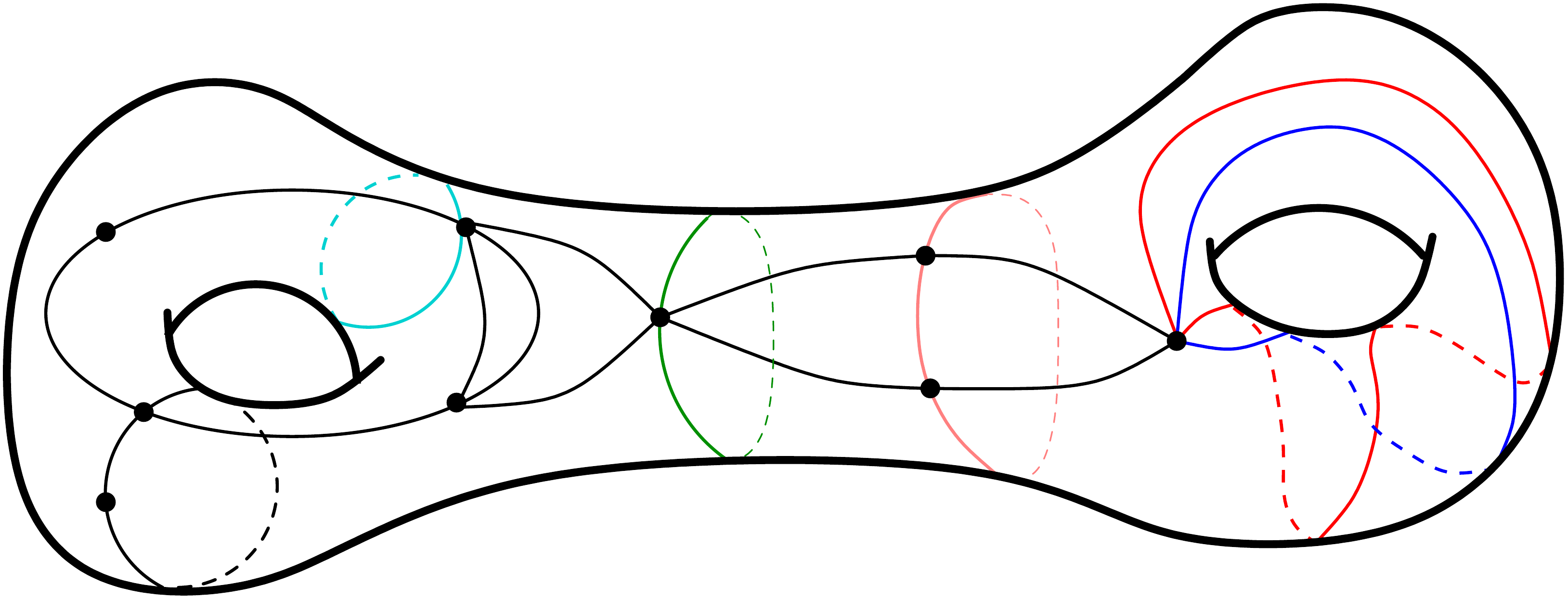}}
\caption{Different kinds of cycles in the Tait graph are shown in different colors. From left to right: loop giving the diagram representativity 2 (cyan), nugatory crossing (green), null-homologous 2-cycle (pink), genus-generating loops with representativity 4 (blue and red).  }
\label{Fig:many_cycles}
\end{figure}

\begin{definition}\label{Def:twist}
Recall, two edges in $G$ are parallel if they are homologous on $F$.
Two crossings of $D$ are \emph{homologically twist-equivalent} if
their corresponding edges are parallel in either $G_A$ or $G_B$.  The
\emph{homological twist number} $\tau_F(D)$ is defined as the number
of homological twist-equivalence classes of crossings of $D$.  Thus,
each homological twist corresponds to one set of parallel edges in
$G_A$ or $G_B$, which is one edge in $G'_A$ or $G'_B$.  
\end{definition}

\begin{figure}
\centerline{\includegraphics[height=90pt]{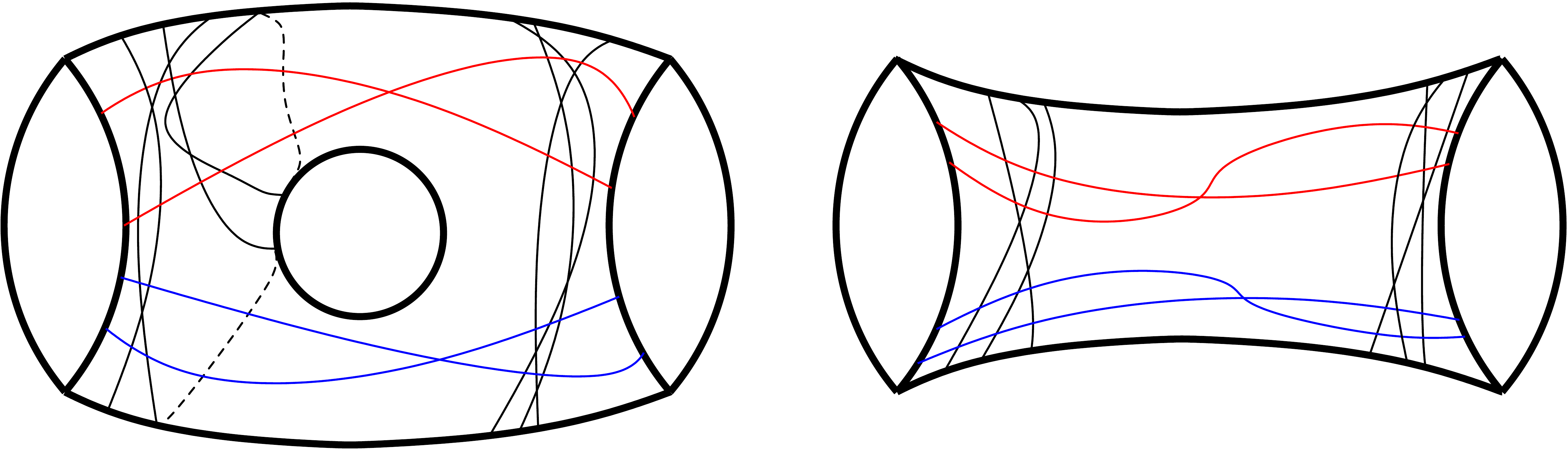}}
\caption{Two alternating link diagrams are projected on $F$, partly
  shown.  In both cases, the red crossing and the blue crossing are
  homologically twist-equivalent.  One Tait graph has homologous loops
  (left) or a null-homologous $2$--cycle (right).  Neither pair of
  crossings forms a twist region on $F$.}
\label{Fig:tau-crossings}
\end{figure}

See \reffig{tau-crossings} for two examples of homologically twist-equivalent crossings of $D$ on $F$, which do not form a twist region on $F$.

\begin{proposition}\label{Prop:twist}
If $t_F(D)$ denotes the twist number, as in \cite[Definition~2.4]{KP}, of a strongly reduced, twist-reduced WGA diagram, then
$$  \tau_F(D)\leq t_F(D)\leq 2\tau_F(D).$$
Moreover, if $g(F)\leq 1$ or the representativity $r(D,F)\geq 5$, then $\tau_F(D)=t_F(D)$.
\end{proposition}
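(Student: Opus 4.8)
The plan is to translate both quantities into the language of the two Tait graphs $G_A$ and $G_B$ and to compare the partition of the crossings of $D$ by twist regions with the partition by homological twist-equivalence. First I would record the elementary observation underlying the lower bound: in a twist region all bigons have the same checkerboard color, so two consecutive crossings share a bigon, and their corresponding edges in the Tait graph of the opposite color cobound the disk made of that bigon together with the two crossings; hence those two edges are homologous, i.e.\ parallel in the sense of \refdef{twist}. By transitivity along the bigon string, every twist region is contained in a single homological twist-equivalence class. Since the twist regions partition the crossings, the twist-region partition refines the homological one, which gives $\tau_F(D)\le t_F(D)$ at once.

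For the upper bound the plan is to show that each homological twist class contains at most two twist regions. The key translation is that if crossings $c,c'$ lie in \emph{different} twist regions but in the same class, then their parallel edges $e,e'$ in $G_A$ or $G_B$ bound a null-homologous simple closed curve $e\cup\overline{e'}$ that cannot bound a disk: by twist-reducedness (see \cite[Definition~2.4]{KP}) a disk would force $c$ and $c'$ to be joined by a string of bigons, hence to lie in the same twist region. Thus every such separating pair is a \emph{genus-generating} gap: the curve $e\cup\overline{e'}$ bounds a subsurface of positive genus, and being a $2$--edge cycle in a Tait graph it meets $D$ transversally in exactly $4$ points (each Tait edge, dual to a crossing, meets $D$ twice). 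As illustrated in \reffig{tau-crossings}, the merging of twist regions within a class is governed precisely by these essential $4$--intersection curves.

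The hard part will be ruling out a third twist region in a single class. Here I would suppose $T_1,T_2,T_3$ all lie in one class, choose representatives $c_i\in T_i$ whose edges $e_1,e_2,e_3$ are mutually parallel in one Tait graph and pairwise separated by genus-generating gaps, and observe that this is exactly the non-loop analogue of the \emph{$3$--petal} configuration of \refdef{petal}. I would then argue that the strongly reduced and WGA hypotheses forbid it: combining the three genus-generating $2$--cycles produces an essential curve that can be isotoped to meet $D$ in fewer than $4$ points, i.e.\ a homologically nontrivial loop in a Tait graph, contradicting strong reducedness (neither Tait graph has loops). Converting a $3$--petal-type separation into such a forbidden low-intersection essential curve is the main obstacle, and I expect it to require the same surface-topology bookkeeping — Euler characteristic of the complementary regions of the parallel edges and additivity of genus — already used in the proof of \reflem{ggl}. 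Granting this, every class meets at most two twist regions, so $t_F(D)\le 2\tau_F(D)$.

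Finally I would treat the two equality regimes by showing that genus-generating gaps cannot occur at all. If $r(D,F)\ge 5$, then no essential simple closed curve meets $D$ in at most $4$ points; but a genus-generating gap supplies exactly such a curve, so there are none, every parallel pair cobounds a disk, the classes coincide with the twist regions, and $\tau_F(D)=t_F(D)$. If $g(F)\le 1$, strong reducedness rules out loops, so all parallel edges are non-loops; on the torus every null-homologous simple closed curve bounds a disk, hence $e\cup\overline{e'}$ always bounds a disk and twist-reducedness again forces $c$ and $c'$ into the same twist region. In either case no merging occurs and $\tau_F(D)=t_F(D)$, which would complete the proof.
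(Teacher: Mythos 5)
Your lower bound and both equality cases are correct and match the paper's proof: twist regions sit inside homological classes because a bigon together with its two crossings is a disk cobounded by the two dual edges of the opposite color, and equality holds when $g(F)\leq 1$ or $r(D,F)\geq 5$ because an essential null-homologous $2$--cycle meets $D$ in exactly $4$ points (impossible if $r(D,F)\geq 5$) and every null-homologous simple closed curve on a torus bounds a disk. The genuine gap is in the upper bound, which is the only place the factor $2$ must come from, and it is exactly the step you flag with ``Granting this'': you never prove that a homological class cannot contain three twist regions. Worse, the route you propose for that step would fail. Three mutually parallel non-loop edges $e_1,e_2,e_3$ lying in three distinct twist regions are \emph{not} a non-loop analogue of the $3$--petal loops of \refdef{petal}: $3$--petal loops require $g(\H(e_1\cup e_2\cup e_3))>0$, whereas in your situation every $2$--cycle $e_i\cup\overline{e_j}$ is null-homologous in $F$, and a positive-genus neighborhood would force two of these cycles to have intersection number $\pm 1$ in $F$, which is impossible for null-homologous classes. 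So $\H(e_1\cup e_2\cup e_3)$ is forced to be planar (a pair of pants); the genus lies in the three complementary pieces, and the bookkeeping of \reflem{ggl} does not transfer. Moreover, the contradiction you aim for cannot materialize: the three boundary curves of that pair of pants each meet $D$ in exactly $4$ points adjacent to two crossings, which is compatible with the WGA representativity hypothesis (only $\geq 4$ is required) and with strong reducedness (which forbids curves meeting $D$ at a \emph{single} crossing, i.e.\ in $2$ points). Locally such a three-region class can be drawn on a genus-$3$ surface without violating any hypothesis, so no argument of the kind you sketch can rule it out.

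The paper's proof takes a different route and never asserts a per-class bound. It classifies each parallel pair by the $2$--cycle it forms: if the $2$--cycle bounds a disk, twist-reducedness forces the two crossings into one twist region; if it is essential, the two crossings are homologically twist-equivalent but (absent a $2$--valent vertex, i.e.\ a bigon) lie in different twist regions, and the discrepancy between $t_F(D)$ and $\tau_F(D)$ is charged to essential null-homologous $2$--cycles in $G_A$ or $G_B$. The equality cases then follow by showing no such $2$--cycles exist under either extra hypothesis, which is the part you reproduced correctly. Note also that your target claim is stronger than what the inequality $t_F(D)\leq 2\tau_F(D)$ actually needs --- only the \emph{average} number of twist regions per homological class must be at most $2$ --- so even a correct proof of your per-class statement would be doing more work than the statement requires, along a path the paper does not take.
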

\begin{proof}
Let $G_A$ and $G_B$ be the Tait graphs of $D$ on $F$, which do not
contain loops since $D$ is strongly reduced.  A pair of edges in $G_A$
or $G_B$ is parallel if and only if they form a null-homologous
$2$--cycle.  If it bounds a disk $\Delta$ on $F$, then the hypothesis
that $D$ is twist-reduced, as in \cite[Definition~2.5]{KP}, implies
that $\Delta$ or a disk in $F-\Delta$ contains a twist region of $D$,
which is the same as a homological twist-equivalence class of
crossings of $D$.  Thus, the two definitions of twist number agree in
this case.

On the other hand, suppose the null-homologous $2$--cycle bounds a
subsurface $F'\subset F$ which is not a disk, so it forms an essential
separating curve on $F$.  Hyperbolicity precludes both vertices from
being $2$--valent, but if one vertex is $2$--valent, then $D$ has a
bigon on $F$ and the two crossings are homologically twist-equivalent.
So the two definitions of twist number agree in this case as well.

However, if neither vertex is $2$--valent, then the two crossings are
homologically twist-equivalent, but are not part of a twist region
because $D$ is twist-reduced.  Moreover, this discrepancy occurs for
every essential null-homologous $2$--cycle without $2$--valent
vertices in $G_A$ or $G_B$.
This proves the inequality.

Finally, an essential null-homologous $2$--cycle in $G_A$ or $G_B$
bounds a compressing disk of $F$, and intersects the diagram $D$ in
$4$ points.  If $g(F)\leq 1$ or $r(D,F)\geq 5$, then neither $G_A$ nor $G_B$ admit
such a $2$--cycle.  In the remaining cases, $\tau_F(D)=t_F(D)$.
\end{proof}

\section{The Jones-Krushkal polynomial}\label{Sec:Jones-Krushkal}

In \cite{Krushkal}, Krushkal defined a homological Kauffman bracket
derived from his 4-variable polynomial $p_G(x,y,u,v)$, and proved the
invariance of a two-variable generalization of the Jones polynomial
for links in thickened surfaces.  We will use a later variant
$J_K(t,z)$, called the reduced Jones-Krushkal polynomial, which was
introduced by Boden and Karimi \cite{Boden-Karimi}.  Following
\cite{Krushkal}, it is proved in \cite{Boden-Karimi} that $J_K(t,z)$
is an invariant of oriented links under isotopy and diffeomorphism of
the thickened surface.

\begin{figure}
\centerline{\includegraphics[height=85pt]{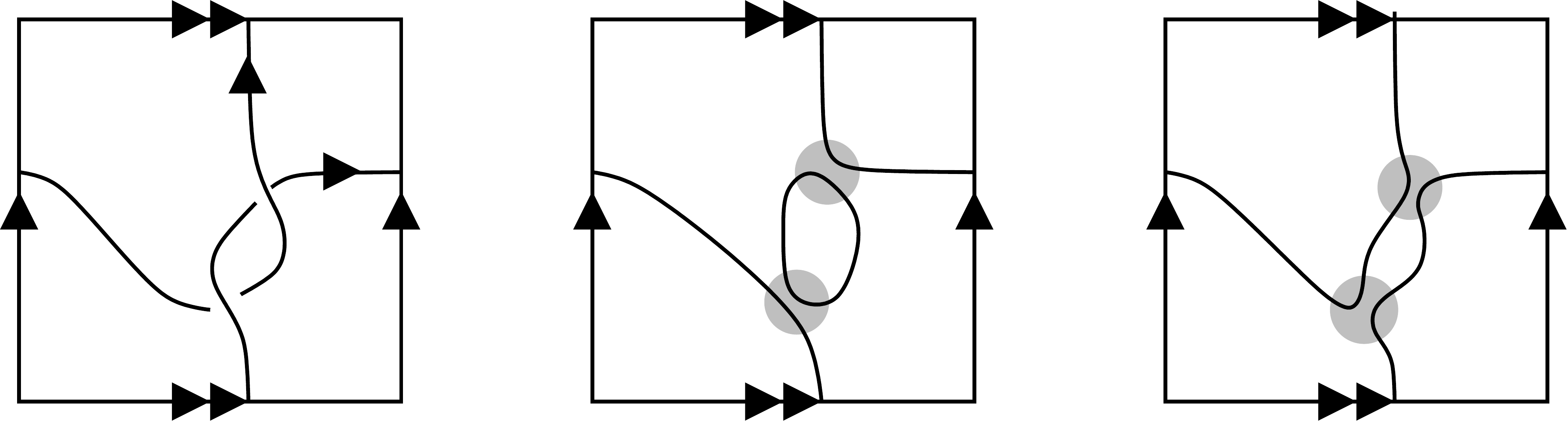}}
\caption{For $D$ on the torus (left), states $s_A$ (middle) and $s_B$ (right) are shown.
  Here, $|s_A|=2,\, |s_B|=1,\, r(s_A)=r(s_B)=1,\, k(s_A)=1,\, k(s_B)=0$.}
\label{Fig:states}
\end{figure}

We briefly recall the homological Kauffman bracket due to Krushkal
\cite{Krushkal}.  Let $F$ be a closed orientable surface of genus $g$.
Let $K$ be a link in $F\times I$, with a link diagram $D$ on $F$.
Suppose that $D$ has $c$ crossings, each of which can be resolved by
an $A$--smoothing or $B$--smoothing.  A \emph{state} $s$ of $D$ is a
collection of simple closed curves on $F$ that results from smoothing
each crossing of $D$.  See \reffig{states}.  Let $a(s)$ and $b(s)$ be
the number of $A$ and $B$--smoothings, and let $|s|$ be the number of
closed curves in $s$.  Let $s_A$ and $s_B$ denote the all--$A$ and
all--$B$ states of $D$, so that for the Tait graphs $G_A$ and $G_B$,
we have $|V(G_A)|=|s_A|$ and $|V(G_B)|=|s_B|$.  Let $n=|V(G_A)|-1$ and
$N=|V(G_B)|-1$.  Define
\begin{align*}
  k(s)=\dim(\text{kernel}(i_*\from H_1(s)\to H_1(F))),\\
  r(s)=\dim(\text{image}(i_*\from H_1(s)\to H_1(F))),
\end{align*}
where $i\from s\to F$ is the inclusion map.  We call $r(s)$ the
\emph{homological rank} of $s$, so that $k(s)+r(s)=|s|$.  The
homological Kauffman bracket is defined as follows:
$$ \langle D\rangle_F = \sum\nolimits_s
A^{(a(s)-b(s))}(-A^{-2}-A^2)^{k(s)}z^{r(s)}.$$ To recover the usual
Kauffman bracket for a classical diagram $D$, we set $z=-A^{-2}-A^2$
and divide by one factor of $-A^{-2}-A^2$.  To obtain the
Jones-Krushkal polynomial, which was the original link invariant
defined in \cite{Krushkal}, we normalize by the writhe as usual,
$(-A)^{-3w(D)}\langle D\rangle_F$, and set $A=t^{-1/4}$.

If $D$ is checkerboard colorable, then $[K]=0$ in $H_1(F\times I)$ by
\cite{Boden-Karimi}, so it follows that $k(s)\geq 1$ for every state
$s$ of $D$.  So we can instead use the following version of the
Jones-Krushkal polynomial due to Boden and Karimi:

\begin{definition}[\cite{Boden-Karimi}]\label{Def:Jones-Krushkal}
Let $K$ be an oriented link in $F\times I$, represented by a
checkerboard-colorable link diagram $D$ on $F$. The \emph{reduced Jones-Krushkal polynomial} is defined by
$$ J_K(t,z)=(-1)^{w(D)}t^{3w(D)/4}\sum\nolimits_s t^{(b(s)-a(s))/4}(-t^{-1/2}-t^{1/2})^{(k(s)-1)}z^{r(s)}.$$
\end{definition}

The reduced Jones-Krushkal polynomial specializes to the usual Jones
polynomial $V_K(t)$ by setting $z=-t^{-1/2}-t^{1/2}$.  Any classical
diagram will have $r(s)=0$ for all states, so that $J_K(t,z)=V_K(t)$
for every classical link $K$.  However, there exist alternating
virtual knots with $V_K(t)=1$ but non-trivial $J_K(t,z)$.

By \cite[Theorem~6.1]{Krushkal} for non-split $D$, we obtain $\langle D\rangle_F$ from
$P_{G_A}(X,Y,U,V)$ as follows:
$$ \langle D\rangle_F (A,z) = A^{(2g+2n-c)}d\, z^g P_{G_A}\left(-A^{-4},-A^4,\frac{A^2}{z},\frac{1}{A^2z}\right).$$
With the additional normalization as in \refdef{Jones-Krushkal}, we obtain $J_K(t,z)$ by
\begin{equation}\label{Eqn:J-P}
  J_K(t,z)=(-1)^w t^{(3w-2g-2n+c)/4} z^g P_{G_A}\left(-t,-t^{-1},\frac{1}{z\sqrt{t}},\frac{\sqrt{t}}{z}\right).
\end{equation}

Recall the definition of \emph{genus-generating loops} and \emph{$3$--petal loops} from \refdef{petal}.

\begin{definition}\label{Def:GGL}
For a reduced alternating diagram $D$ on $F$, let $\ell(G_A')$ and $\ell(G_B')$ be the subgraphs of loops in the reduced Tait graphs $G_A'$ and $G_B'$.
Define
\begin{align*}
  \gamma(D) &=\#\{\,\{e_1,e_2\}\subset\ell(G_A')\ |\ g(\H(e_1\cup e_2))>0\}, \\
  \bar\gamma(D) &=\#\{\,\{e_1,e_2\}\subset\ell(G_B')\ |\ g(\H(e_1\cup e_2))>0\}.
\end{align*}
\end{definition}

\begin{theorem}\label{Thm:Jones-Krushkal}
For a closed orientable surface $F$ of genus $g\geq 0$, let $K$ be a
non-split oriented link in $F\times I$ that admits a reduced
alternating diagram $D$ on $F$, such that neither of its Tait graphs has $3$--petal loops.
Let
$$ \lambda=|\ell(G_A')|,\ \bar\lambda=|\ell(G_B')|,\ \mu=b_1(G'_A-\ell(G_A')),\ \bar\mu=b_1(G'_B-\ell(G_B')),\ \gamma=\gamma(D),\ \bar\gamma=\bar\gamma(D).$$
Then 
\begin{equation}\label{Eqn:twist}
  \tau_F(D) = b_1(G_A') + b_1(G_B') -2g = \lambda + \mu +  \bar\lambda + \bar\mu -2g
\end{equation}
and the reduced Jones-Krushkal polynomial $J_K(t,z)$ has the following coefficients:
\begin{equation}\label{Eqn:JK}
 (-1)^{(w+n)}t^{\frac{3w+2n+c}{4}}\left((-1)^c t^{(g-c)}\left(\bar\lambda z t^{\frac{1}{2}} - (\bar\mu-\bar\gamma) t\right) - (\mu-\gamma) t^{-1} + \lambda z t^{-\frac{1}{2}}\right),
\end{equation}
where $c$ and $w$ are the crossing number and writhe of $D$, and $n=|V(G_A)|-1$.
\end{theorem}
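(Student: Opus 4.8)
The statement has two parts: the purely combinatorial twist-number identity \refeqn{twist}, and the coefficient formula \refeqn{JK}, which is obtained from \refthm{Krushkal} and \reflem{ggl} through the change of variables \refeqn{J-P}. I would prove \refeqn{JK} first, since it is the computational heart, and then \refeqn{twist}.

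The plan for \refeqn{JK} is to read off the relevant coefficients of $P_{G_A}(X,Y,U,V)$ and substitute them into \refeqn{J-P}. Because $D$ is reduced alternating, by \refdef{Tait} neither Tait graph has homologically trivial loops, so $k=|\ell_0(G_A)|=0$ and $\bar k=|\ell_0(G_B)|=0$, which kills the $Y^k$ factors. Applying \refthm{Krushkal} to $G_A$ gives the coefficient $\mu$ on $V^gX^{n-1}$ and $\lambda$ on $V^{g-1}X^n$, and since neither Tait graph has $3$--petal loops, \reflem{ggl} applies to $G_A$ and yields the coefficient $\gamma$ on $UV^{g-1}X^n$. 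The dual coefficients come from Krushkal's duality relation $p_G(x,y,u,v)=p_{G^*}(y,x,v,u)$, i.e. $P_{G_A}(X,Y,U,V)=P_{G_B}(Y,X,V,U)$ with $G_B=G_A^*$: applying \refthm{Krushkal} and \reflem{ggl} to $G_B$ and transporting through this relation places $\bar\mu$ on $U^gY^{N-1}$, $\bar\lambda$ on $U^{g-1}Y^N$, and $\bar\gamma$ on $VU^{g-1}Y^N$ inside $P_{G_A}$, where $N=|V(G_B)|-1$. I would then substitute $X=-t$, $Y=-t^{-1}$, $U=1/(z\sqrt t)$, $V=\sqrt t/z$ and multiply by the prefactor $(-1)^wt^{(3w-2g-2n+c)/4}z^g$ of \refeqn{J-P}. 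Two bookkeeping inputs make the exponents collapse: the cellular-embedding Euler relation $|V(G_A)|+|V(G_B)|=c+2-2g$, i.e. $n+N=c-2g$, together with $k=\bar k=0$. A short calculation shows that the $\mu$--monomial $V^gX^{n-1}$ and the $\gamma$--monomial $UV^{g-1}X^n$ map to the same power $z^0t^{(3w+2n+c)/4-1}$ with opposite signs, so they combine into $-(\mu-\gamma)t^{-1}$ after factoring out $(-1)^{w+n}t^{(3w+2n+c)/4}$; dually the $\bar\mu$-- and $\bar\gamma$--monomials combine into the $(-1)^ct^{g-c}(-(\bar\mu-\bar\gamma)t)$ piece; and the loop monomials $V^{g-1}X^n$ and $U^{g-1}Y^N$ each retain one factor of $z$, contributing $\lambda zt^{-1/2}$ and $(-1)^ct^{g-c}\bar\lambda zt^{1/2}$. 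Assembling these four pieces gives \refeqn{JK}. Finally, since the unique edgeless subgraph contributes the extremal monomial with coefficient $1$ (as in the proof of \refthm{Krushkal}), these four monomials are exactly the sub-extremal coefficients of $J_K(t,z)$, with nothing else mapping onto them.

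For \refeqn{twist} the second equality is immediate: each loop of $G'_A$ raises $b_1$ by one, so $b_1(G'_A)=\mu+\lambda$ and $b_1(G'_B)=\bar\mu+\bar\lambda$. For the first equality I would use the Euler relation again: since $D$ is non-split and cellularly embedded, $G'_A$ and $G'_B$ are connected, so $b_1(G'_A)+b_1(G'_B)=|E(G'_A)|+|E(G'_B)|-|V(G_A)|-|V(G_B)|+2=|E(G'_A)|+|E(G'_B)|-c+2g$. Thus \refeqn{twist} is equivalent to $\tau_F(D)=|E(G'_A)|+|E(G'_B)|-c$. By \refdef{twist}, $\tau_F(D)$ is the number of classes of the equivalence relation generated by ``parallel in $G_A$ or in $G_B$''; I would encode this as the bipartite incidence graph $\Gamma$ whose vertices are the parallel classes of $G_A$ together with those of $G_B$ (so $|E(G'_A)|+|E(G'_B)|$ vertices) and whose edges are the $c$ crossings, each joining its $G_A$--class to its $G_B$--class. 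The twist classes are precisely the connected components of $\Gamma$, so $\tau_F(D)=|E(G'_A)|+|E(G'_B)|-c+b_1(\Gamma)$, and the identity reduces to showing that $\Gamma$ is a forest.

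The main obstacle is exactly this acyclicity of $\Gamma$. A cycle in $\Gamma$ would be a sequence of crossings alternately parallel in $G_B$ and in $G_A$ that closes up through distinct classes; since \refdef{parallel} makes ``parallel'' mean forming a null-homologous $2$--cycle on $F$, ruling out such cycles is where the reduced alternating structure and the hypothesis that neither Tait graph has $3$--petal loops (hence, via \refdef{petal}, no genus-generating degeneracies) must be invoked, in the same spirit as \reflem{ggl}. I expect this to be the delicate step, whereas the substitution producing \refeqn{JK} is routine once the coefficients from \refthm{Krushkal} and \reflem{ggl} and their duals are in hand.
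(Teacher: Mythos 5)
Your setup for \refeqn{JK} follows the paper's strategy exactly (apply \refthm{Krushkal} and \reflem{ggl} to $G_A$, transport to $G_B$ by duality, substitute via \refeqn{J-P}), and your exponent and sign bookkeeping, including the collision of the $\mu$-- and $\gamma$--monomials and its dual, is correct. But the clause ``with nothing else mapping onto them'' is asserted, and it is precisely the part of the statement that requires proof: the substitution \refeqn{J-P} collapses four variables to two, so many distinct monomials of $P_{G_A}(X,Y,U,V)$ can land on the same monomial of $J_K(t,z)$. The paper's proof spends most of its length on this. For the $\mu$--term it solves the exponent equations and finds exactly three candidates, $V^gX^{n-1}$, $UV^{g-1}X^n$, and $V^gX^nY$, and the third is excluded only because $G_A$ has no $3$--petal loops (a subgraph contributing $V^gX^nY$ would have to consist of homologically nontrivial loops with $k(H)>0$, i.e.\ $3$--petal loops); dually $U^gXY^N$ must be excluded for the $\bar\mu$--term, and a separate computation shows the $\lambda$-- and $\bar\lambda$--terms have no competitors at all. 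This is where the $3$--petal hypothesis actually does its work in the theorem; in your proposal that hypothesis is invoked only to apply \reflem{ggl}, and the uniqueness analysis is missing, so \refeqn{JK} is not established as giving the coefficients of $J_K(t,z)$.

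For \refeqn{twist}, your reformulation via the bipartite incidence graph $\Gamma$ is correct ($\tau_F(D)=|E_A'|+|E_B'|-c+b_1(\Gamma)$, so the identity is equivalent to $\Gamma$ being a forest), but you explicitly leave the acyclicity unproven and call it the main obstacle; as written, the proposal therefore does not prove \refeqn{twist}. The paper never confronts $\Gamma$: by \refdef{twist}, each homological twist class is taken to be a single set of parallel edges of $G_A$ or of $G_B$ (one edge of $G_A'$ or $G_B'$), and the proof is then the direct count $\tau_F(D)=|E_A|-(|E_A|-|E_A'|)-(|E_B|-|E_B'|)$ followed by Euler's formula $|V_A|+|V_B|=|E_A|+2-2g$ --- your computation, minus the worry. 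Note also that your guess about what closes the gap is misplaced: in the paper the $3$--petal/genus-generating hypotheses play no role in \refeqn{twist}, which is claimed for every reduced alternating diagram. If you wanted to close your gap, the ``mixing'' cycles in $\Gamma$ of length at least $4$ can be ruled out by a mod--$2$ intersection count: such a cycle produces a crossing whose $G_A$--edge is parallel to an edge at one other crossing and whose $G_B$--edge is parallel to an edge at a third crossing, and then a null-homologous cycle in one Tait graph would meet a cycle of the dual graph in exactly one transverse point, contradicting evenness of mod--$2$ intersection with a separating curve. What remains are $2$--cycles of $\Gamma$, i.e.\ a pair of crossings parallel in both Tait graphs (a Hopf-clasp configuration, which does occur for the Hopf link on $S^2$ and makes the count fail there); ruling these out under the theorem's hypotheses is the genuine content your proposal, and arguably the paper's own count, leaves implicit.
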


We prove \refthm{Jones-Krushkal} after the following corollary, which
is important for \refthm{Intro}.  Recall that $D$ is strongly reduced
when neither $G_A$ nor $G_B$ has loops, so in particular,
$\gamma(D)=\bar\gamma(D)=0$.  In addition, $\gamma(D)=\bar\gamma(D)=0$
implies that neither Tait graph of $D$ has $3$--petal loops.

\begin{corollary}\label{Cor:Jones-Krushkal}
  If $D$ is a reduced alternating diagram on $F$, such that
  $\gamma(D)=\bar\gamma(D)=0$, then $\tau_F(D)$ is a link invariant of $K$ in $F\times I$.
\end{corollary}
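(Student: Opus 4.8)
The plan is to recover $\tau_F(D)$ directly from the invariant polynomial $J_K(t,z)$ by reading off the coefficients supplied by \refthm{Jones-Krushkal}. Boden and Karimi \cite{Boden-Karimi} proved that $J_K(t,z)$ is an invariant of $K$ in $F\times I$, so, viewing it as a Laurent polynomial in $t^{1/4}$ and $z$, each of its coefficients is an invariant of $K$. The strategy is therefore to locate inside $J_K(t,z)$ the four coefficients displayed in \refeqn{JK}, show that under the hypothesis $\gamma(D)=\bar\gamma(D)=0$ their absolute values are exactly $\mu$, $\bar\mu$, $\lambda$, $\bar\lambda$, and then invoke the identity \refeqn{twist}, namely $\tau_F(D)=\lambda+\mu+\bar\lambda+\bar\mu-2g$, with $g$ fixed as the genus of $F$.

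First I would record that the hypothesis $\gamma(D)=\bar\gamma(D)=0$ plays two distinct roles. It forces neither Tait graph of $D$ to have $3$--petal loops (as noted just before the corollary), so that \refthm{Jones-Krushkal} applies; and it collapses the genus-corrected quantities $\mu-\gamma$ and $\bar\mu-\bar\gamma$ in \refeqn{JK} to $\mu$ and $\bar\mu$, so that the two $z^0$ coefficients read off $\mu$ and $\bar\mu$ cleanly rather than mixing in the genus-generating contribution of \reflem{ggl}. Writing $J_K(t,z)=\sum_j z^j J^{(j)}_K(t)$, the $\mu$-- and $\bar\mu$--terms of \refeqn{JK} lie in the invariant Laurent polynomial $J^{(0)}_K(t)=J_K(t,0)$, while the $\lambda$-- and $\bar\lambda$--terms are the coefficients of $z^1$ in the invariant $J^{(1)}_K(t)$. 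Since $\mu,\bar\mu,\lambda,\bar\lambda$ are non-negative integers, each equals the absolute value of one of these four coefficients, hence is an invariant of $K$; summing and subtracting $2g$ via \refeqn{twist} then shows that $\tau_F(D)$ depends only on $J_K(t,z)$, which proves the corollary.

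The main obstacle is this middle step: verifying that the four coefficients of \refeqn{JK} genuinely occupy four distinct, intrinsically locatable monomials of $J_K(t,z)$, so that they can be extracted from the invariant without reference to $D$. After factoring out the common normalization $(-1)^{w+n}t^{(3w+2n+c)/4}$, the two $z^0$ exponents are $-1$ and $g-c+1$ and the two $z^1$ exponents are $-\tfrac12$ and $g-c+\tfrac12$; here $\mu,\bar\mu$ sit just inside the extremal terms of $J_K(t,0)$ (whose outermost coefficients are $\pm1$, arising from the vertex-only subgraph $H_0$ and its dual), while $\lambda,\bar\lambda$ sit at the extremes of the $z^1$ part. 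Thus the $z^0$ pair collides only when $c=g+2$ and the $z^1$ pair only when $c=g+1$. The Euler-characteristic count $|V(G_A)|+|V(G_B)|=c+2-2g$ shows that in either degenerate case the colliding contributions either involve a single-vertex Tait graph (so one of the two summands vanishes) or combine with equal signs; in both situations the absolute value of the merged coefficient equals the required sum, so that $\lambda+\mu+\bar\lambda+\bar\mu$, which is all that \refeqn{twist} needs, is still recovered. The remaining routine point is that no other spanning subgraph contributes to these four monomials, which is precisely the content carried by \refthm{Jones-Krushkal} and \reflem{ggl} together with the hypothesis $\gamma(D)=\bar\gamma(D)=0$.
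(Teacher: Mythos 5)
Your overall strategy---recovering $\tau_F(D)$ from the invariant polynomial $J_K(t,z)$ via \refthm{Jones-Krushkal} and \refeqn{twist}, after checking that the four coefficients of \refeqn{JK} occupy locatable positions in $J_K(t,z)$---is the same as the paper's, and your identification of the potential collisions (the $z^0$ pair when $c=g+2$, the $z^1$ pair when $c=g+1$) matches the paper exactly. The genuine gap is in how you dispose of those collision cases. You assert that when two monomials of \refeqn{JK} land on the same $(t,z)$-monomial, the coefficient of $J_K$ there is the \emph{sum} of the two displayed coefficients, so that its absolute value still returns $\mu+\bar\mu$ (resp.\ $\lambda+\bar\lambda$). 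This is unjustified and in fact false: \refeqn{JK} is obtained by pushing forward four monomials of $P_{G_A}(X,Y,U,V)$ from \refeqn{Krushkal-terms}, and in precisely the degenerate cases those monomials of $P_{G_A}$ \emph{themselves coincide}, so adding their coefficients double-counts a single coefficient of $P_{G_A}$. For instance, when $(g,c)=(1,2)$ one has $n=N=0$, so the $\lambda$-monomial $V^{g-1}X^n$ and the $\bar\lambda$-monomial $U^{g-1}Y^N$ are both the constant term of $P_{G_A}$. Take $G_A$ to be the one-vertex graph on the torus with two loops in the classes $a$ and $b$ (this is the Tait graph of the reduced alternating $2$--crossing diagram on $T^2$): then $P_{G_A}=U+V+2$, and the constant term is $2=\lambda=\bar\lambda$, not $\lambda+\bar\lambda=4$. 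Reading the merged coefficient as $\lambda+\bar\lambda$ would output $\tau_F=2-2g=0$ instead of the correct value $\tau_F(D)=2$. So the claim ``the absolute value of the merged coefficient equals the required sum,'' taken as a statement about the true coefficients of $J_K$, is wrong.

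What saves the extraction is that such configurations can never satisfy the hypothesis (in the example above $\gamma(D)=1$), but that is exactly the point that must be proved, and it is the heart of the paper's proof, which is missing from yours. The paper shows that under $\gamma(D)=\bar\gamma(D)=0$ the collision cases are \emph{vacuous}: if $c=g+1$ or $c=g+2$, the cellular-embedding count $|V_A|+|V_B|=c+2-2g$ forces $(g,c)\in\{(1,2),(1,3),(2,4)\}$ and forces one Tait graph $G$ to have a single vertex, hence to consist only of homologically nontrivial loops; since $D$ is cellularly embedded, Krushkal's identity $g(\H)+g(F-\H)+\Lambda(H)=g(F)$ from \cite[Equation~(5.5)]{Krushkal}, applied to $H=G$, gives $g(F-\H)=\Lambda(G)=0$, hence $g(\H)=g>0$, which produces a pair of genus-generating loops in $G$ and contradicts $\gamma(D)=\bar\gamma(D)=0$. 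With the degenerate cases excluded, the four terms of \refeqn{JK} are genuinely distinct terms of $J_K(t,z)$ and no merging analysis is needed (or valid). Your sign/vanishing analysis in the degenerate branch therefore reasons incorrectly about an empty set of cases; it must be replaced by this topological exclusion argument. (A minor further point: the paper handles $g=0$ separately by citing the classical result, whereas your argument implicitly assumes $g\geq 1$.)
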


\begin{proof}
For $g(F)=0$, the twist number is a link invariant by the proof of the
Tait flyping conjecture in \cite{menasco-thist:alternating}, so we may
assume $g(F)\geq 1$.  By \cite{Boden-Karimi}, $J_K(t,z)$ is an invariant
of $K$ in $F\times I$.  Thus, by \refthm{Jones-Krushkal}, $\tau_F(D)$
is a link invariant when $\gamma(D)=\bar\gamma(D)=0$, and the terms in
\refeqn{JK} are distinct terms in $J_K(t,z)$.

The terms in \refeqn{JK} coincide when $(-1)^c t^{(g-c)}=\pm t^{-1}$
or $\pm t^{-2}$; i.e., when $c=g+1$ or $c=g+2$.  As $D$ is cellularly
embedded, $c=|V_A|+|V_B|+2g-2$ with $|V_A|,|V_B|\geq 1$, which allows
only the cases: $(g,c)\in\{(1,2),(1,3),(2,4)\}$.  Moreover, both
$c=g+1$ and $c=g+2$ imply that either $|V_A|=1$ or $|V_B|=1$.  So one
Tait graph $G$ consists of only loops, and as $D$ is reduced
alternating, these loops are homologically non-trivial.

Let $H\subset G$.  For $[\partial\H]$ in $H_1(F)$, let $\Lambda(H)=\dim([\partial\H])$.
By \cite[Equation~(5.5)]{Krushkal}, 
$$ g(\H)+g(F-\H)+\Lambda(H)=g(F). $$
Since $D$ is cellularly embedded, then so is $G$.
Thus, for $H=G$ consisting of homologically non-trivial loops, we have $g(F-\H)=\Lambda(H)=0$.
Hence, $g(\H)>0$, which implies that at least one pair of loops in $G$ must be
genus-generating loops, which are excluded by the condition $\gamma(D)=\bar\gamma(D)=0$.

Thus, when $\gamma(D)=\bar\gamma(D)=0$, the terms in \refeqn{JK} are
distinct terms in $J_K(t,z)$.
\end{proof}

The proof of \refcor{Jones-Krushkal} relies on the condition $\gamma(D)=\bar\gamma(D)=0$, but it may not be necessary.

\begin{question}
  If $D$ is a reduced alternating diagram on $F$, is $\tau_F(D)$ a link invariant of $K$ in $F\times I$?
\end{question}

\begin{proof}[{\bf Proof of \refthm{Jones-Krushkal}}]
If $g=0$, then $D$ is a classical link diagram.  In this case,
$\lambda=\bar\lambda=0$ since loops in its Tait graph can only come
from nugatory crossings, so $\gamma=\bar\gamma=0$.  For classical links,
$J_K(t,z)=V_K(t)$, so now both \refeqn{twist} and \refeqn{JK} follow
from \cite{DL}.
  
To prove \refeqn{twist} for $g>0$, we extend the argument in \cite{DL}
to links in thickened surfaces.  Let
$G_A=(V_A,E_A),\ G'_A=(V_A,E_A'),\ G_B=(V_B,E_B),\ G'_B=(V_B,E_B')$.
Since $G_A$ and $G_B$ are dual graphs on $F$, $|E_A|=|E_B|$ and
$|V_A|+|V_B|=|E_A| + 2-2g$.  The homological twist number $\tau_F(D)$
counts sets of homologically twist-equivalent crossings, which we can
count using sets of parallel edges in $G_A$ and $G_B$, as follows:
  \begin{align*}
    \tau_F(D) &= |E_A|-(|E_A|-|E_A'|)-(|E_B|-|E_B'|)\\
    &= |E_A'|+|E_B'|-|E_A|\\
    &= |E_A'|+|E_B'| - (|V_A|+|V_B|-2+2g)\\
    &= (|E_A'|-|V_A|+1) + (|E_B'|-|V_B|+1) -2g\\
    &= b_1(G_A') + b_1(G_B') -2g\\
    &= \lambda + \mu +  \bar\lambda + \bar\mu -2g.
  \end{align*}

We now prove \refeqn{JK} for $g>0$.  Let $P_{G_A}(X,Y,U,V)$ be as in
\refthm{Krushkal}, with $G=G_A$.  By duality \cite[Theorem~3.1]{Krushkal},
$P_{G_B}(X,Y,U,V)=P_{G_A}(Y,X,V,U)$.  Therefore, by \refthm{Krushkal},
$\lambda, \bar\lambda, \mu, \bar\mu$ are exactly the coefficients of
the following terms of $P_{G_A}(X,Y,U,V)$:
\begin{equation}\label{Eqn:Krushkal-terms}
  \mu\,V^g X^{n-1} + \lambda\, V^{g-1} X^n + \bar\mu\, U^g Y^{N-1} + \bar\lambda\, U^{g-1} Y^N,
\end{equation}
where $n=|V_A|-1$ and $N=|V_B|-1$.  Using $\chi(F)=|V_A|+|V_B|-c$, we have $n+N=c-2g$.

Let $\pi(X^\alpha Y^\beta U^i V^j)\in\Z[t^{\pm 1/2},z]$ denote
the term in $J_K(t,z)$ obtained from $X^\alpha Y^\beta U^i V^j$ by
the substitutions in \refeqn{J-P}.
We evaluate each term in \refeqn{Krushkal-terms}: 
\begin{align*}
  {\textstyle\pi(V^g X^{n-1})} &={\textstyle (-1)^w t^{\frac{3w-2g-2n+c}{4}} z^g \left(\frac{\sqrt{t}}{z}\right)^g (-t)^{n-1} = (-1)^{(w+n)}\,t^{\frac{3w+2n+c}{4}} (-t^{-1})},\\
  {\textstyle\pi(V^{g-1} X^n)} &={\textstyle (-1)^w t^{\frac{3w-2g-2n+c}{4}} z^g \left(\frac{\sqrt{t}}{z}\right)^{g-1}\!\! (-t)^n = (-1)^{(w+n)}\,t^{\frac{3w+2n+c}{4}} (z t^{-\frac{1}{2}})},\\
  {\textstyle\pi(U^g Y^{N-1})} &={\textstyle (-1)^w t^{\frac{3w-2g-2n+c}{4}} z^g \left(\frac{1}{z\sqrt{t}}\right)^g (-t)^{2g+n-c+1}=(-1)^{(w+n)}\,t^{\frac{3w+2n+c}{4}} (-1)^c t^{(g-c)} (-t)},\\
  {\textstyle\pi(U^{g-1} Y^N)} &={\textstyle (-1)^w t^{\frac{3w-2g-2n+c}{4}} z^g \left(\frac{1}{z\sqrt{t}}\right)^{g-1}\!\! (-t)^{2g+n-c} = (-1)^{(w+n)}\,t^{\frac{3w+2n+c}{4}} (-1)^c t^{(g-c)} (z t^{\frac{1}{2}})}.
\end{align*}  
This verifies that the terms in \refeqn{JK} come from the corresponding terms in \refeqn{Krushkal-terms}.
We now find the other terms in $P_{G_A}(X,Y,U,V)$ that overlap with these terms in $J_K(t,z)$.

For the $\mu$--term, suppose $\pi(X^\alpha Y^\beta U^i V^j)=\pm\pi(V^g X^{n-1})$.  
Since the RHS has no $z$ factor, it follows that $i+j=g$.
From exponents on $t$, we have
$$ \alpha-\beta-i/2+j/2=g/2+n-1 \implies \alpha+j+1=\beta+g+n.$$
If $\alpha=n-\kappa$ for some integer $\kappa\geq 0$, then
$$ n-\kappa+j+1=\beta+(i+j)+n \geq 0 \implies \kappa=0 \text{\ or \ } \kappa=1.$$
If $\alpha=n$ then $\beta+i=1$, so $\beta,i\in\{0,1\}$.  If $\alpha=n-1$ then $\beta+i=0$, so $\beta=i=0$.
We are left with only three possibilities:
\begin{align*}
  \alpha=n-1,\ \beta=0,\ i=0,\ j=g &\implies V^g X^{n-1} \\
  \alpha=n,\ \beta=0,\ i=1,\ j=g-1 &\implies UV^{g-1} X^n \\
  \alpha=n,\ \beta=1,\ i=0,\ j=g &\implies V^g X^nY 
\end{align*}
We already know $\mu\,V^g X^{n-1}$ is in $P_{G_A}(X,Y,U,V)$.  Since
$G_A$ does not have $3$--petal loops, we can apply \reflem{ggl} to see
that $UV^{g-1}X^n$ has coefficient $\gamma$ in $P_{G_A}(X,Y,U,V)$.  As
a term in $J_K(t,z)$, $\pi(V^g X^{n-1})=-\pi(UV^{g-1} X^n)$ because
$X^n$ and $X^{n-1}$ contribute opposite signs, so we call it the
$(\mu-\gamma)$--term in $J_K(t,z)$.  For the final case above, we
claim that $V^g X^nY$ cannot be a term in $P_{G_A}(X,Y,U,V)$.  Suppose
there exists $H\subset G_A$ whose weight contributes to $V^g X^nY$.
As in the proof of \reflem{ggl}, the factor $X^n$ implies $H\subset
\ell(G_A)$.  Because $D$ is reduced alternating on $F$, all loops in
$G_A$ are homologically non-trivial.  The factor $Y$ implies that $H$
has weight with a factor $y^k$ for $k>0$, so $H$ must contain
$3$--petal loops, which are excluded by hypothesis.  Thus, $V^g X^nY$
cannot be a term in $P_{G_A}(X,Y,U,V)$.  With the cases exhausted, we
see that no other terms in $P_{G_A}(X,Y,U,V)$ besides $V^g X^{n-1}$
and $UV^{g-1}X^n$ contribute to the $(\mu-\gamma)$--term in
$J_K(t,z)$.

For the $\bar\mu$--term, we can use duality \cite[Theorem~3.1]{Krushkal}:  $P_{G_A}(X,Y,U,V)=P_{G_B}(Y,X,V,U)$.
If $\pi(X^\alpha Y^\beta U^i V^j)=\pm\pi(U^g Y^{N-1})$, the argument above for the dual graph $G_B$ again implies only three possibilities:
\begin{align*}
  \alpha=0,\ \beta=N-1,\ i=g,\ j=0 &\implies U^g Y^{N-1} \\
  \alpha=0,\ \beta=N,\ i=g-1,\ j=1 &\implies U^{g-1}V Y^N \\
  \alpha=1,\ \beta=N,\ i=g,\ j=0 &\implies U^g XY^N 
\end{align*}  
By the same arguments on the dual graph, for $D$ reduced alternating, only $U^g Y^{N-1}$ and $U^{g-1}V Y^N$ are terms in $P_{G_A}(X,Y,U,V)$.
Therefore, no other terms in $P_{G_A}(X,Y,U,V)$ besides these terms contribute to the $(\bar\mu-\bar\gamma)$--term in $J_K(t,z)$. 

For the $\lambda$--term, suppose $\pi(X^\alpha Y^\beta U^i V^j)=\pm\pi(V^{g-1} X^n)$.
Since the RHS has a $z$ factor, it follows that $i+j=g-1$.
From exponents on $t$, we have
$$ \alpha-\beta-i/2+j/2=(g-1)/2+n \implies \alpha=\beta+i+n.$$
If $\alpha=n-\kappa$ for some integer $\kappa\geq 0$, then
$$ n-\kappa=\beta+i+n \geq 0 \implies \beta=i=\kappa=0.$$
This leaves only one possibility:
$$  \alpha=n,\ \beta=0,\ i=0,\ j=g-1 \implies V^{g-1} X^n.$$
Therefore, no other terms in $P_{G_A}(X,Y,U,V)$ besides $V^{g-1} X^n$ contribute to the $\lambda$--term in $J_K(t,z)$. 
For the $\bar\lambda$--term, we can use a similar argument or use duality again.

This completes the proof of \refeqn{JK}.
\end{proof}

\begin{lemma}\label{Lem:extremal}
For $K$ in $F\times I$ as in \refthm{Jones-Krushkal}, only the terms $V^gX^n$
and $U^gY^N$ of $P_{G_A}(X,Y,U,V)$ contribute the extremal terms of
$J_K(t,1)$, which has span $(c-g)$.
\end{lemma}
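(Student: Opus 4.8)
The plan is to reduce the statement to a single linear functional on the monomials of $P_{G_A}$. Setting $z=1$ in \refeqn{J-P} gives the substitutions $X=-t$, $Y=-t^{-1}$, $U=t^{-1/2}$, $V=t^{1/2}$, so each monomial $X^\alpha Y^\beta U^i V^j$ of $P_{G_A}$ maps to a single power $t^{E}$, where
$$E=\alpha-\beta-\tfrac12 i+\tfrac12 j,$$
up to the common prefactor $(-1)^w t^{(3w-2g-2n+c)/4}$ and a sign. Thus the extremal terms of $J_K(t,1)$ are controlled by the monomials of $P_{G_A}$ maximizing and minimizing $E$, and the span of $J_K(t,1)$ equals $E_{\max}-E_{\min}$. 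First I would record the constraints obeyed by every monomial of $P_{G_A}$. By \refdef{Krushkal}, the weight of a spanning subgraph $H$ is $(X-1)^{c(H)-c(G)}(Y-1)^{k(H)}U^{g(\H)}V^{g(F-\H)}$, so any monomial it contributes satisfies $i=g(\H)\ge 0$, $j=g(F-\H)\ge 0$, $0\le\alpha\le c(H)-c(G)$, and $0\le\beta\le k(H)$. Since $G_A$ is connected and $H$ has at most $|V_A|$ components, $\alpha\le |V_A|-1=n$; and Krushkal's relation $g(\H)+g(F-\H)+\Lambda(H)=g$ \cite[Equation~(5.5)]{Krushkal} gives $i+j\le g$.

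These constraints give $E\le n-0-0+\tfrac12 g=n+\tfrac12 g$, with equality forcing $(\alpha,\beta,i,j)=(n,0,0,g)$, so the unique $E$-maximizing monomial is $V^gX^n$. I would then confirm it occurs with nonzero coefficient and from no other subgraph. The proof of \refthm{Krushkal} already shows that $v^gx^n$ occurs in $p_G$ with coefficient $1$, coming only from the edgeless subgraph $H_0$; the point to check is that, after the substitution $x=X-1$, no other subgraph reaches $X^n$ within the $V^g$-part. Indeed, any subgraph contributing to $V^g$ has $j=g$, hence $i=\Lambda(H)=0$ by the relation above, so $\H$ is planar and $H_1(\H)$ is generated by $\partial\H$; a nonempty set of homologically non-trivial loops would then put a nonzero class in the image of $H_1(\partial\H)\to H_1(F)$, contradicting $\Lambda(H)=0$. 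Thus such an $H$ has no loops, and to reach $X^n$ it must also satisfy $c(H)-c(G)=n$, i.e.\ have no non-loop edges, forcing $H=H_0$. Hence $V^gX^n$ has coefficient $1$ in $P_{G_A}$. For the minimum I would pass to the dual: under $P_{G_A}(X,Y,U,V)=P_{G_B}(Y,X,V,U)$ \cite[Theorem~3.1]{Krushkal} the correspondence $(\alpha,\beta,i,j)\mapsto(\beta,\alpha,j,i)$ changes the sign of $E$, so minimizing $E$ over $P_{G_A}$ is maximizing it over $P_{G_B}$; the unique maximizer is the edgeless subgraph of $G_B$, whose weight $V^gX^N$ corresponds to $U^gY^N$ in $P_{G_A}$, again with coefficient $1$ and $E=-N-\tfrac12 g$.

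Finally, $E_{\max}-E_{\min}=(n+\tfrac12 g)-(-N-\tfrac12 g)=n+N+g=c-g$ by $n+N=c-2g$, giving the asserted span. The main obstacle is the non-cancellation built into the word ``only'': one must ensure that each extreme value of $E$ is achieved by a single monomial of $P_{G_A}$ carrying a nonzero coefficient, so that no lower subgraph contributes to the same extreme power of $t$, and the extremal coefficients of $J_K(t,1)$ are exactly the images of $V^gX^n$ and $U^gY^N$. This rigidity is precisely where the reduced-alternating hypothesis is used, through the fact that all loops in the Tait graphs are homologically non-trivial. When $g=0$ the Tait graphs have no loops and the argument recovers the classical span computation.
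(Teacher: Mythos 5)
Your proof is correct, and while it shares the paper's overall skeleton (identify the extremal monomials of $P_{G_A}$, show that only the edgeless subgraph $H_0$ and $H=G_A$ produce them, and handle the second extreme by duality $P_{G_A}(X,Y,U,V)=P_{G_B}(Y,X,V,U)$), it takes a genuinely more self-contained route in two places. First, the paper does not prove the span statement: it quotes \cite[Theorem~2.9]{Boden-Karimi} for the fact that $J_K(t,1)$ has span exactly $(c-g)$, and only then identifies the contributing subgraphs. You instead derive the span internally, by extremizing the degree functional $E=\alpha-\beta-\tfrac12 i+\tfrac12 j$ subject to the constraints $0\le\alpha\le n$, $\beta\ge 0$, $i,j\ge 0$, $i+j\le g$ (the last via Krushkal's relation $g(\H)+g(F-\H)+\Lambda(H)=g(F)$), and checking that the unique extremizing exponent vectors $V^gX^n$ and $U^gY^N$ actually occur with coefficient $1$. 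Second, the non-cancellation step is argued differently: the paper rules out a competing subgraph $H\subset\ell(G_A)$ of weight $v^gx^ny^k$ with $k>0$ by asserting it would contain $3$--petal loops, which the hypothesis of \refthm{Jones-Krushkal} excludes; you instead note that $g(F-\H)=g$ forces $g(\H)=\Lambda(H)=0$, so $\H$ is planar, $H_1(\H)$ is carried by $\partial\H$, and a homologically non-trivial loop in $H$ would contradict $\Lambda(H)=0$. Your argument uses only the reduced-alternating hypothesis (all loops in the Tait graphs are homologically non-trivial) and never invokes the $3$--petal condition, so it establishes the lemma under slightly weaker hypotheses and closes directly a step that the paper leaves terse; what the paper's route buys is brevity, by reusing the mechanism of \reflem{ggl} and the known span theorem of Boden--Karimi.
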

\begin{proof}
By \cite[Theorem~2.9]{Boden-Karimi}, and dividing by one factor of
$-A^{-2}-A^2$ for the reduced polynomial, the span of $J_K(t,1)$ is
exactly $(c-g)$.  We now identify the subgraphs of $G_A$ that
contribute the two extremal terms of $J_K(t,1)$.  By \refeqn{J-P}, the
term in $P_{G_A}(X,Y,U,V)$ which contributes the highest $t$--degree
term of $J_K(t,1)$ has the highest $X$--degree and highest
$V$--degree.  Namely, the unique spanning subgraph $H_0$ in $G_A$ with
an empty edge set has weight $v^gx^n$.  Similarly, $H=G_A$ has weight
$u^gy^N$, which contributes the the lowest $t$--degree term of
$J_K(t,1)$.  Thus, $P_{G_A}(X,Y,U,V)$ has the terms $V^gX^n$ and
$U^gY^N$, which contribute the extremal terms of $J_K(t,1)$.

We claim that no other terms of $P_{G_A}(X,Y,U,V)$ contribute the
extremal terms of $J_K(t,1)$.  Suppose there exists $H\subset G_A$
whose weight also contributes to $V^g X^n$.  As in the proof of
\reflem{ggl}, the factor $X^n$ implies $H$ has weight with factor
$x^n$ and $H\subset \ell(G_A)$.  Thus, $H$ has weight $v^gx^ny^k$ for
$k\geq 0$.  Because $D$ is reduced alternating on $F$, all loops in
$G_A$ are homologically non-trivial.  If $k>0$ then $H$ must contain
$3$--petal loops, which are excluded by hypothesis.  Thus, only $H_0$
contributes the term $V^gX^n$ in $P_{G_A}(X,Y,U,V)$.  The argument
for $H=G_A$ follows by duality \cite[Theorem~3.1]{Krushkal},
$P_{G_B}(X,Y,U,V)=P_{G_A}(Y,X,V,U)$.
\end{proof}

\begin{proof}[{\bf Proof of \refthm{Intro}}]
Since $D$ is strongly reduced, $\lambda=\bar\lambda=0$.
Thus, by \refeqn{twist},
$$ \tau_F(D) = \mu + \bar\mu - 2g,$$
which is a link invariant of $K$ in $F\times I$ by \refcor{Jones-Krushkal}.

We claim that the $\mu$ and $\bar\mu$ terms in \refeqn{JK} with
$\lambda=\bar\lambda=0$ and $\gamma=\bar\gamma=0$ are exactly the
sub-extremal terms of $J_K(t,0)$.  By \reflem{extremal}, only the
terms $V^gX^n$ and $U^gY^N$ of $P_{G_A}(X,Y,U,V)$ contribute the
extremal terms of $J_K(t,1)$, which has span $(c-g)$.  The $\mu$ and
$\bar\mu$ terms in \refeqn{Krushkal-terms} differ from $V^gX^n$ and
$U^gY^N$, and in $J_K(t,1)$ they have span $(c-g-2)$ by \refeqn{JK},
so they are the sub-extremal terms of $J_K(t,1)$.
Moreover, by \refeqn{J-P} neither the extremal terms nor the $\mu$ and $\bar\mu$ terms
have a $z$ factor in $J_K(t,z)$.  Thus, the $\mu$ and $\bar\mu$ terms
in \refeqn{JK} are exactly the sub-extremal terms of $J_K(t,0)$.  This
proves the first part of \refthm{Intro}.

By \refprop{twist},
$$ \tau_F(D)\leq t_F(D)\leq 2\tau_F(D).$$
The volume bounds in \refthm{Intro} now follow from
\cite[Theorem~1.4]{KP}.  Since essential null-homologous cycles occur
only for $g\geq 2$, the bounds for $g=1$ are the same.  Since
$\tau_F(D)\leq t_F(D)$, the lower bound for $g\geq 2$ is the same.
Since $t_F(D)\leq 2\tau_F(D)$, the upper bound for $g\geq 2$ must be
doubled.
\end{proof}  

\section{Examples}

Below we provide data to confirm \refthm{Krushkal} and \refthm{Jones-Krushkal} for three virtual links.

\subsection*{Example 1}

\begin{figure}
\centerline{ \includegraphics[height=100pt]{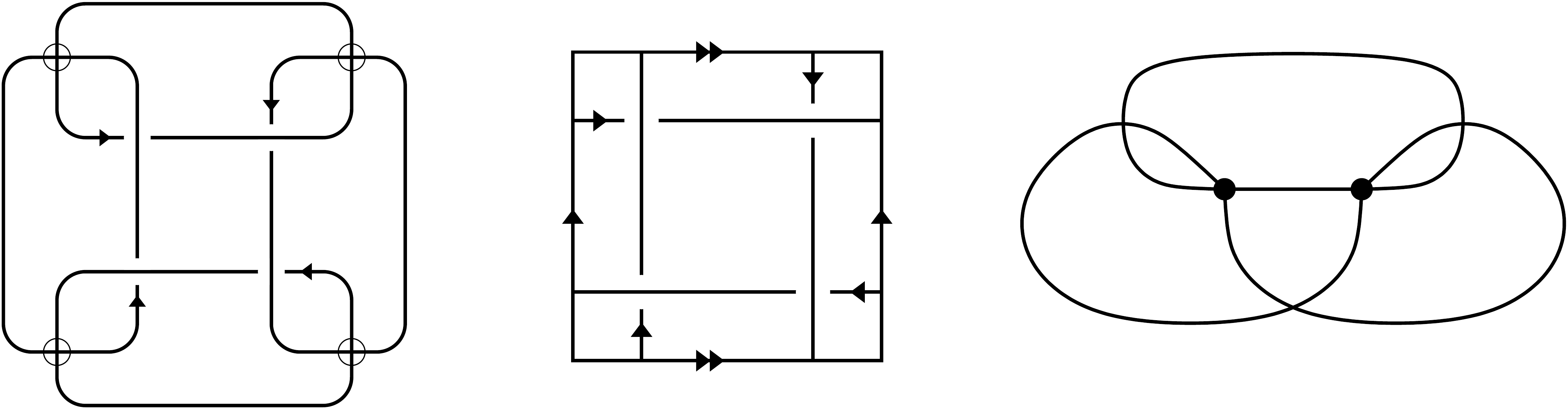}}
\caption{The $2\times 2$ square weave on the torus as a virtual link, its diagram $D$ with $\tau_F(D)=4$,
  and self-dual Tait graphs $G_A=G_B$ shown as a ribbon graph.}
\label{Fig:BK15}
\end{figure}

The $4$--component virtual link $K_1$ shown in \reffig{BK15} is also discussed in \cite[Example~3.10]{Boden-Karimi}.
For its $2\times 2$ square weave diagram $D$ on the torus, $\tau_F(D)=4$.
We have the following data from this diagram:
\begin{align*}
&  g=1,\ \mu= 3,\ \lambda= 0,\ \gamma= 0,\ \bar\mu= 3,\ \bar\lambda= 0,\ \bar\gamma= 0,\ c=4,\ w=-4,\ n=1,\ N=1.\\ \\
& \text{Eqn}~\refeqn{twist}:\ \tau_F(D) = \lambda + \mu +  \bar\lambda + \bar\mu -2g = 4 \\ \\
&  \text{Eqn}~\refeqn{Krushkal-terms}:\  \mu\,V^g X^{n-1} + \lambda\, V^{g-1} X^n + \bar\mu\, U^g Y^{N-1} + \bar\lambda\, U^{g-1} Y^N = 3V+3U \\ \\
&  P_{G_A}(X,Y,U,V) = VX+6+UY+\ \framebox{$3V+3U$} \\ \\
&  \text{Eqn}~\refeqn{JK}:\ -\bar\lambda(zt^{-4}) + (\bar\mu-\bar\gamma)t^{-7/2} + (\mu-\gamma)t^{-5/2} - \lambda(z t^{-2})= 0+3t^{-7/2} + 3t^{-5/2}+0 \\ \\
&  J_K(t,z) =  -t^{-9/2} +\ \framebox{$3t^{-7/2} + 3t^{-5/2}$} - t^{-3/2} + \left(6zt^{-3}\right) \\ \\
\end{align*}

These results agree with \refthm{Krushkal} and \refthm{Jones-Krushkal}.
Note that $\lambda=\bar\lambda=0$, so we can compute
$\tau_F(K_1)$ directly from the sub-extremal coefficients of
$J_K(t,0)$.  As discussed in \cite{ckp:gbal}, the hyperbolic volume of
$T^2\times I-K_1$ is $4\voct$, which is within the bounds of
\refthm{Intro} for $\tau_F(K_1)=4$, although $D$ has representativity
$2$.

\vfill
\pagebreak

\subsection*{Example 2}

\begin{figure}
\centerline{\includegraphics[height=150pt]{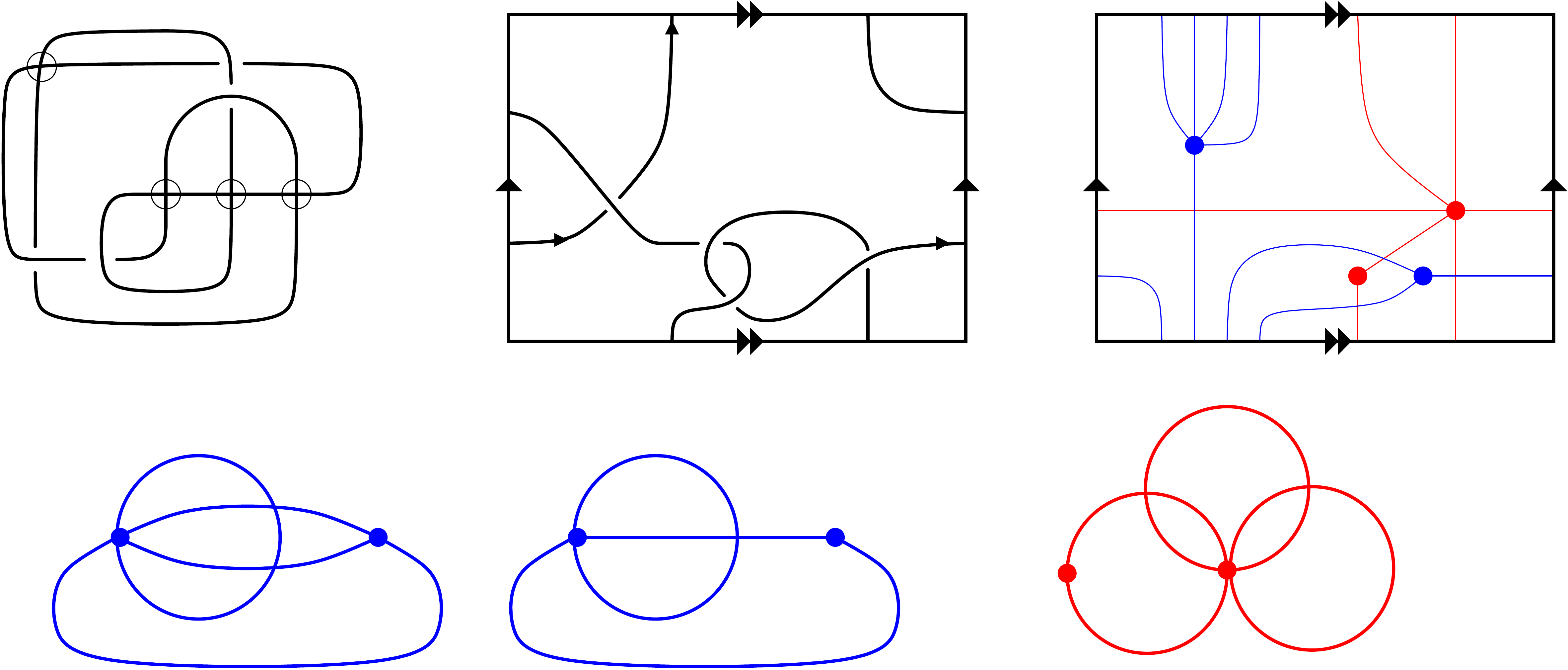}}
\caption{First row, left to right: Virtual knot 4.106, its diagram $D$ on the torus with $\tau_F(D)=3$, and its Tait graphs $G_A$ (red) and $G_B$ (blue) on the torus.
  Second row, left to right, shown as ribbon graphs: Tait graph $G_B$ and its reduction $G_B'$ (blue), and $G_A$ (red) which is already reduced.  Note the pair of genus-generating loops in $G_A$.}
\label{Fig:4-106}
\end{figure}

The virtual knot $K_2=4.106$ is shown in \reffig{4-106}, with a diagram $D$ shown on the torus.
We have the following data from this diagram:
\begin{align*}
&  g=1,\ \mu= 1,\ \lambda= 2,\ \gamma= 1,\ \bar\mu= 1,\ \bar\lambda= 1,\ \bar\gamma= 0,\ c=4,\ w=-2,\ n=1,\ N=1.\\ \\
& \text{Eqn}~\refeqn{twist}:\ \tau_F(D) = \lambda + \mu +  \bar\lambda + \bar\mu -2g = 3 \\ \\
&  \text{Eqn}~\refeqn{Krushkal-terms}:\  \mu\,V^g X^{n-1} + \lambda\, V^{g-1} X^n + \bar\mu\, U^g Y^{N-1} + \bar\lambda\, U^{g-1} Y^N = V + 2X + U + Y  \\ \\
&  P_{G_A}(X,Y,U,V) = UX+UY+VX+\framebox{$V+2X+U+Y$}+2\\ \\
&  \text{Eqn}~\refeqn{JK}:\ \bar\lambda(-zt^{-5/2}) + (\bar\mu-\bar\gamma)t^{-2} + (\mu-\gamma)t^{-1} +  \lambda(-z t^{-1/2})= -zt^{-5/2} + t^{-2} + 0 - 2zt^{-1/2} \\ \\
&  J_K(t,z) =  -t^{-3} + \framebox{$t^{-2}$} -1 + \left(\framebox{$-zt^{-5/2}$}+2zt^{-3/2} \ \framebox{$-2zt^{-1/2}$}\right)\\ \\
\end{align*}

These results agree with \refthm{Krushkal} and \refthm{Jones-Krushkal}.
Note that one of the coefficients in \refeqn{JK} is zero because
$\mu=\gamma=1$.  In this case, $\tau_F(K_2)$ cannot be computed directly
from the coefficients of $J_K(t,z)$.  Also, note that if we set
$z=-t^{-1/2}-t^{1/2}$, then $J_K(t,z) = 1$, so the virtual knot
$4.106$ has trivial Jones polynomial.

\vfill
\pagebreak

\subsection*{Example 3}

\begin{figure}
 \centerline{\includegraphics[height=190pt]{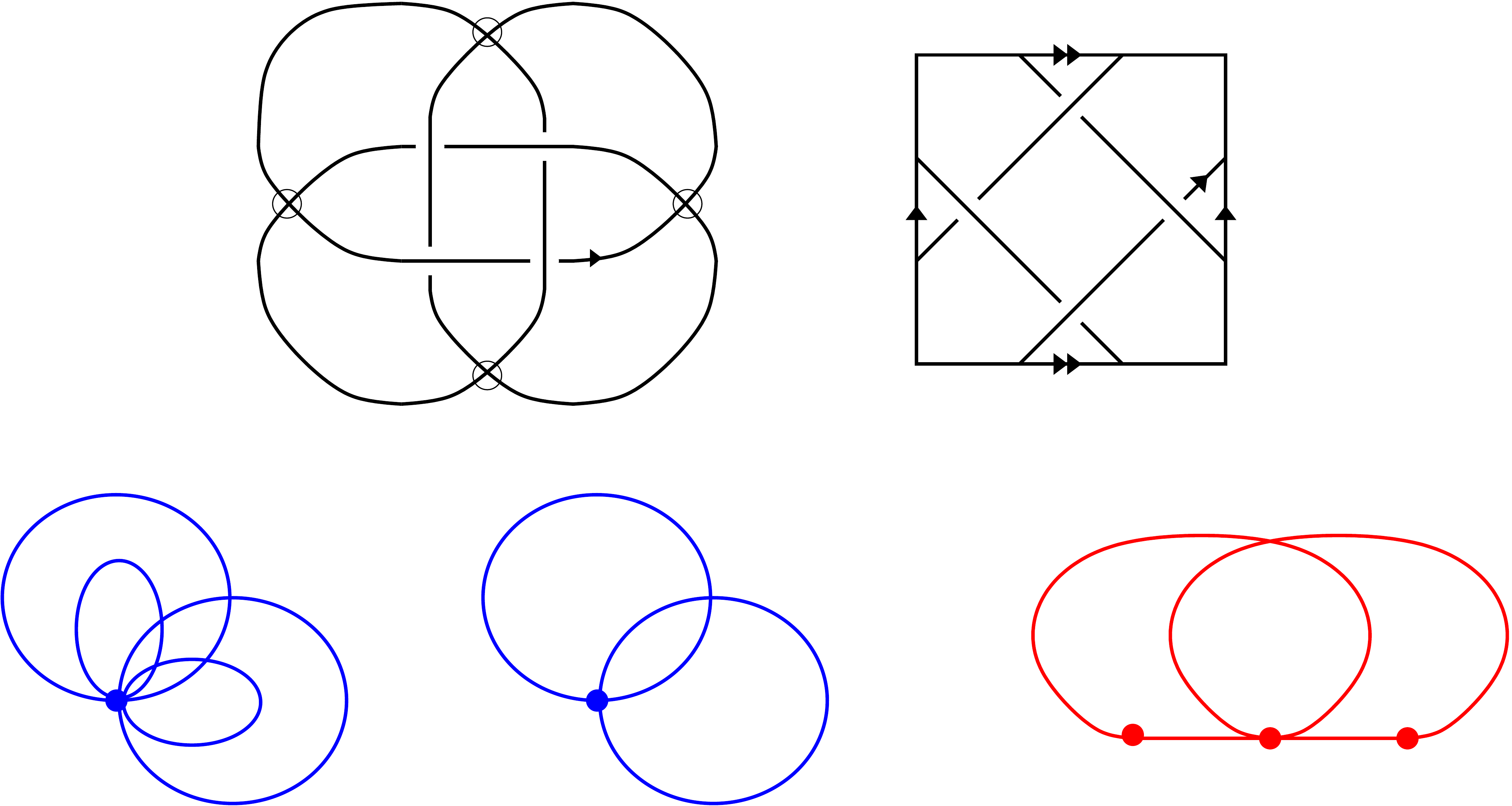}} 
\caption{First row, left to right: Virtual knot 4.105, and its diagram $D$ on the torus with $\tau_F(D)=2$.
  Second row, left to right, shown as ribbon graphs: Tait graph $G_B$ and its reduction $G_B'$ (blue), and $G_A$ (red) which is already reduced.
  Note the pair of genus-generating loops in $G_B'$.}
\label{Fig:8_18}
\end{figure}

The virtual knot $K_3=4.105$ is shown in \reffig{8_18}, with a diagram $D$ shown on the torus.
From the diagram on the torus, we can see $\tau_F(D)=2$, but it is less apparent from the virtual link diagram which evokes the knot $8_{18}$.
We have the following data from this diagram:
\begin{align*}
&  g=1,\ \mu= 2,\ \lambda= 0,\ \gamma= 0,\ \bar\mu= 0,\ \bar\lambda= 2,\ \bar\gamma= 1,\ c=4,\ w=-4,\ n=2,\ N=0.\\ \\
& \text{Eqn}~\refeqn{twist}:\ \tau_F(D) = \lambda + \mu +  \bar\lambda + \bar\mu -2g = 2 \\ \\
&  \text{Eqn}~\refeqn{Krushkal-terms}:\  \mu\,V^g X^{n-1} + \lambda\, V^{g-1} X^n + \bar\mu\, U^g Y^{N-1} + \bar\lambda\, U^{g-1} Y^N = 2VX+2 \\ \\
&  P_{G_A}(X,Y,U,V) = VX^2+U+V+2X+\ \framebox{$2VX+2$} \\ \\
&  \text{Eqn}~\refeqn{JK}:\ \bar\lambda(zt^{-7/2}) - (\bar\mu-\bar\gamma)t^{-3} - (\mu-\gamma)t^{-2} +  \lambda(z t^{-3/2})= 2zt^{-7/2}+ t^{-3} -2t^{-2} +0 \\ \\
&  J_K(t,z) =  t^{-4} +\ \framebox{$t^{-3} -2t^{-2}$} + t^{-1} + \left(\framebox{$2zt^{-7/2}$}\ -2zt^{-5/2}\right)\\ \\
\end{align*}

These results agree with \refthm{Krushkal} and \refthm{Jones-Krushkal}.
Note that because $\bar\gamma=1,\ \tau_F(K_3)$ cannot be computed
directly from the coefficients of $J_K(t,z)$.

\bibliographystyle{amsplain}
\bibliography{references}

\end{document}